\documentclass[]{article}
\usepackage{graphicx}
\usepackage{epstopdf}
\usepackage{amsfonts}
\usepackage{amsmath}
\usepackage{amssymb}
\usepackage{fancyhdr}
\usepackage{titlesec}
\usepackage{indentfirst}
\usepackage{booktabs}
\usepackage{verbatim}
\usepackage{color}
\usepackage{amsthm}
\usepackage{subfigure}

\usepackage[page,header]{appendix}
\usepackage{titletoc}

\numberwithin{equation}{section}
\newtheorem{theorem}{Theorem}[section]
\newtheorem{lemma}[theorem]{Lemma}

\newtheorem{proposition}[theorem]{Proposition}

\newtheorem{remark}[theorem]{Remark}


\newcommand{\rd}{\mathrm{d}}

\newcommand{\im}{\mathrm{i}}
\newcommand{\e}{\mathrm{e}}
\newcommand{\mQ}{\mathcal{Q}}
\newcommand{\mP}{\mathcal{P}}

\newcommand{\bR}{\mathbb{R}}
\newcommand{\bS}{\mathbb{S}}

\newcommand{\Sd}{\mathbb{S}}
\newcommand{\D}{\mathcal{D}_L}

\newcommand{\fe}{f_{N}}


\topmargin 0cm \oddsidemargin 0.28cm \evensidemargin 0.28cm
\textwidth 16cm \textheight 22.8cm

\headheight 0cm
\headsep 0cm

\begin{document}

	\title{A new stability and convergence proof \\of the Fourier-Galerkin spectral method \\for the spatially homogeneous Boltzmann equation}

	\author{Jingwei Hu\footnote{Department of Mathematics, Purdue University, West Lafayette, IN 47907, USA (jingweihu@purdue.edu).}, \  
	Kunlun Qi\footnote{Department of Mathematics, City University of Hong Kong, Hong Kong, China (kunlun.qi@my.cityu.edu.hk).}, \  
	and \ Tong Yang\footnote{Department of Mathematics, City University of Hong Kong, Hong Kong, China (matyang@cityu.edu.hk).}}    
	\maketitle
	
\begin{abstract}
Numerical approximation of the Boltzmann equation is a challenging problem due to its high-dimensional, nonlocal, and nonlinear collision integral. Over the past decade, the Fourier-Galerkin spectral method \cite{PR00} has become a popular deterministic method for solving the Boltzmann equation, manifested by its high accuracy and potential of being further accelerated by the fast Fourier transform. Albeit its practical success, the stability of the method is only recently proved in \cite{FM11} by utilizing the ``spreading" property of the collision operator. In this work, we provide a new proof based on a careful $L^2$ estimate of the negative part of the solution. We also discuss the applicability of the result to various initial data, including both continuous and discontinuous functions.
\end{abstract}

{\small 
{\bf Key words.} Boltzmann equation, Fourier-Galerkin spectral method, well-posedness, stability, convergence, discontinuous, filter.

{\bf AMS subject classifications.} 35Q20, 65M12, 65M70, 45G10.
}


\section{Introduction}
\label{sec:intro}

The Boltzmann equation is one of the fundamental equations in kinetic theory and serves as a basic building block to connect microscopic Newtonian mechanics and macroscopic continuum mechanics \cite{Cercignani, Villani02}. Albeit its wide applicability, numerical approximation of the Boltzmann equation is a challenging scientific problem due to the complicated structure of the equation (high-dimensional, nonlinear, and nonlocal). As such, the particle based direct simulation Monte Carlo method (DSMC) \cite{Bird} has been widely used in various applications for its simplicity and low computational cost. Nevertheless, the stochastic method suffers from slow convergence and becomes extremely expensive when simulating non-steady and low-speed flows. 

Since the pioneering work \cite{PP96, PR00}, it has been realized that the Fourier-Galerkin spectral method offers a suitable framework to approximate the Boltzmann collision operator. First of all, it is a deterministic method and provides very accurate results compared with stochastic method. Secondly, the Boltzmann collision operator is translation-invariant and the Fourier basis exactly leverages this structure. Thirdly, after the Galerkin projection, the collision operator presents a convolution-like structure, which opens the possibility to further accelerate the method by the fast Fourier transform (FFT) \cite{MP06, GHHH17}. Because of the above reasons, over the past decade, the Fourier spectral method has become a very popular deterministic method for solving the Boltzmann equation and related collisional kinetic models, see for instance, \cite{PRT00, FR03, FMP06, HY12, JAH19_1}, or the recent review article \cite{pareschi}.

As opposed to its practical success, the theoretical study of the Fourier spectral method is quite limited, largely because the spectral approximation destroys the positivity of the solution, yet the positivity is one of the key properties to study the well-posedness of the equation. In \cite{PR00stability}, a positivity-preserving filter is applied to the equation to enforce the positivity of the solution. As a result, the stability of the method can be easily proved. However, the filter often comes with the price of significantly smearing the solution (hence destroying the spectral accuracy) and should be used only when the solution contains discontinuities (to suppress the oscillations caused by Gibbs phenomenon). Recently, a stability proof for the original Fourier spectral method is established in \cite{FM11}, where the authors provide a quite complete study of the method including both finite and long time behavior. The key strategy in \cite{FM11} is to use the ``spreading" or ``mixing" property of the collision operator to show that the solution will become everywhere positive after a small time. Motivated by this work, we present in this paper a different well-posedness and stability proof. The main difference from \cite{FM11} lies in that, instead of requiring the solution to be positive everywhere which is a stronger condition to achieve, we show that the $L^2$ norm of the negative part of the solution can be controlled as long as it is small initially. In other words, the solution is allowed to be negative for the method to remain stable. Therefore, our strategy does not rely on any sophisticated property of the collision operator and provides a simpler proof. In addition, we quantify clearly the requirement on the initial condition for the method to be stable, which includes both continuous and discontinuous functions. 

We mention another line of research which develops the conservative-spectral approximation for the Boltzmann equation \cite{GT09}. Apart from apparent differences (the Fourier-Galerkin method considered in this paper is based on domain truncation and periodization, while the method \cite{GT09} is based on Fourier transform and no periodization is performed), a conservation subroutine is added to restore the mass, momentum, and energy conservation. As a consequence, the method is able to preserve the Maxwellian distribution as time goes to infinity. The stability and convergence of the method is recently established in \cite{AGT18}, where the Fourier projection is only applied to the gain part of the collision operator. In contrast, both gain and loss terms are projected in our method, hence the loss term does not possess a definite sign.

The paper is essentially self-contained. In Section~\ref{sec:review}, we briefly review the Fourier-Galerkin spectral method for the spatially homogeneous Boltzmann equation. After that, we discuss the basic assumptions (e.g., the collision kernel and truncation parameters) used throughout the paper. The assumptions on the initial condition are addressed in Section~\ref{subsec:initial}, which will play an important role in proving the main result. In Section~\ref{sec:QR} (and Appendix), we provide some preliminary estimates on the truncated collision operator. These are known results in the whole space but some subtle differences appear in the torus. Section~\ref{sec:main} presents our main result. We first conduct a $L^2$ estimate of the negative part of the solution and then prove a local existence/uniqueness result. Finally, the well-posedness and stability of the method on an arbitrary bounded time interval is established in Section~\ref{subsec:main} (Theorem~\ref{existencetheorem}). Facilitated with the stability result, the paper is concluded in Section~\ref{sec:conv} with a straightforward convergence and spectral accuracy proof of the method.

\section{Fourier-Galerkin spectral method for the spatially homogeneous Boltzmann equation}
\label{sec:review}

In this section, we review the Fourier-Galerkin spectral method for the spatially homogeneous Boltzmann equation. The presentation follows the formulation originally proposed in \cite{PR00} which is the basis for many fast algorithms developed recently \cite{GHHH17, HM19, HQ20}. Here we limit the description to the extent that is sufficient for the following proof. At the end of the section, we discuss the basic assumptions used throughout the rest of the paper, in particular, the assumptions on the initial condition.

The spatially homogeneous Boltzmann equation reads
\begin{equation} \label{BE}
\partial_{t} f= Q(f,f), \quad t>0, \ v\in \mathbb{R}^d, \ d\geq 2,
\end{equation}
where $f=f(t,v)$ is the probability density function of time $t$ and velocity $v$, $Q$ is the collision operator describing the binary collisions among particles, whose bilinear form is given by
\begin{equation} \label{Qstrong}
Q(g,f)(v)=\int_{\bR^d}\int_{\Sd^{d-1}}B(|v-v_*|,\cos \theta)[g(v_*')f(v')-g(v_*)f(v)]\,\rd{\sigma}\, \rd{v_*}.
\end{equation}
In (\ref{Qstrong}), $\sigma$ is a vector varying over the unit sphere $\Sd^{d-1}$, $v'$ and $v_*'$ are defined as
\begin{equation}
v'=\frac{v+v_*}{2}+\frac{|v-v_*|}{2}\sigma, \quad v_*'=\frac{v+v_*}{2}-\frac{|v-v_*|}{2}\sigma,
\end{equation}
and $B\geq 0$ is the collision kernel. In this paper we will consider the kernel of the form
\begin{equation} \label{kernel}
B(|v-v_*|,\cos \theta)=\Phi(|v-v_*|) b(\cos \theta), \quad \cos \theta =\frac{\sigma\cdot (v-v_*)}{|v-v_*|},
\end{equation}
whose kinetic part $\Phi$ is a non-negative function and angular part $ b$ satisfies the Grad's cut-off assumption
\begin{equation} \label{cutoff}
\int_{\Sd^{d-1}} b(\cos \theta)\,\rd{\sigma}<\infty.
\end{equation}

To apply the Fourier-Galerkin spectral method, we consider an approximated problem of (\ref{BE}) on a torus $\mathcal{D}_L=[-L,L]^d$:
\begin{equation} \label{ABE}
\left\{
\begin{split}
&\partial_{t} f = Q^{R}(f,f), \quad t>0, \ v\in \mathcal{D}_L,\\
& f(0,v)= f^{0}(v), 
\end{split}
\right.
\end{equation}
where the initial condition $f^0$ is a non-negative periodic function, $Q^{R}$ is the truncated collision operator defined by
\begin{equation}\label{QR}
\begin{split}
Q^R(g,f)(v)&=\int_{\mathcal{B}_R}\int_{\Sd^{d-1}}\Phi(|q|)b(\sigma\cdot \hat{q})\left[g(v_*')f(v')-g(v-q)f(v)\right]\, \rd{\sigma}\, \rd{q}\\
&=\int_{\bR^d} \int_{\Sd^{d-1}}\mathbf{1}_{|q|\leq R}\Phi(|q|)b(\sigma\cdot \hat{q})\left[g(v_*')f(v')-g(v-q)f(v)\right]\,\rd{\sigma}\, \rd{q},
\end{split}
\end{equation}
where a change of variable $v_*\rightarrow q=v-v_*$ is applied and the new variable $q$ is truncated to a ball $\mathcal{B}_R$ with radius $R$ centered at the origin. We write $q=|q|\hat{q}$ with $|q|$ being the magnitude and $\hat{q}$ being the direction. Accordingly, 
\begin{equation}
v'=v-\frac{q-|q|\sigma}{2}, \quad v_*'=v-\frac{q+|q|\sigma}{2}.
\end{equation}
In practice, the values of $L$ and $R$ are often chosen by an anti-aliasing argument \cite{PR00}: assume that $\text{Supp} (f^0(v))\subset \mathcal{B}_S$, then one can take
\begin{equation} \label{RL1}
R=2S, \quad L\geq \frac{3+\sqrt{2}}{2}S.
\end{equation}

Given an integer $N\geq0$, we then seek a truncated Fourier series expansion of $f$ as
\begin{equation}
f(t,v)\approx f_N(t,v)=\sum\limits_{k = -N/2}^{N/2} f_k(t) \e^{\im \frac{\pi}{L}k\cdot v} \in \mathbb{P}_N,
\end{equation}
where 
\begin{equation}
\mathbb{P}_N=\text{span} \left\{ \e^{\im \frac{\pi}{L} k\cdot v}\Big| -N/2\leq k \leq N/2 \right\}\footnote{Note here $k=(k_1,\dots,k_d)$ is a vector, $-N/2\leq k \leq N/2$ means $-N/2\leq k_j \leq N/2$, $j=1,\dots,d$, and $\sum_{k=-N/2}^{N/2}:=\sum_{k_1=-N/2}^{N/2}\cdots \sum_{k_d=-N/2}^{N/2}$.},
\end{equation}
equipped with inner product 
\begin{equation}
\langle f,g \rangle = \frac{1}{(2L)^{d}}\int_{\D} f \bar{g}\, \rd v.
\end{equation}
Substituting $f_N$ into (\ref{ABE}) and conducting the Galerkin projection onto the space $\mathbb{P}_N$ yields
\begin{equation} \label{PFS}
\left\{
\begin{split}
&\partial_{t} f_N = \mP_N Q^{R}(f_N,f_N), \quad t>0, \ v\in \mathcal{D}_L,\\
& f_N(0,v)=f_{N}^{0}(v),
\end{split}
\right.
\end{equation}
where $\mP_N$ is the projection operator: for any function $g$,
\begin{equation}\label{proj}
\mP_N g=\sum_{k=-N/2}^{N/2}\hat{g}_k \e^{\im \frac{\pi}{L}k\cdot v}, \quad \hat{g}_k=\langle g, \e^{\im \frac{\pi}{L}k\cdot v}\rangle,
\end{equation}
$f_N^0\in \mathbb{P}_N$ is the initial condition to the numerical system and should be a reasonable approximation to $f^0$. More discussion on the initial condition will be given in Section~\ref{subsec:initial}, which in fact plays an important role in the following proof.

Writing out each Fourier mode of (\ref{PFS}), we obtain
\begin{equation} \label{FS}
\left\{
\begin{split}
&\partial_{t} f_k = Q^{R}_k, \quad  -N/2\leq k\leq N/2,\\
& f_k(0)= f^{0}_k,
\end{split}
\right.
\end{equation}
with
\begin{equation}
Q_{k}^R:=\langle Q^R(f_N,f_N), \e^{\im \frac{\pi}{L}k\cdot v}\rangle, \quad f^0_k:=\langle f_N^0, \e^{\im \frac{\pi}{L}k\cdot v}\rangle.
\end{equation}
Using the definition in (\ref{QR}) and orthogonality of the Fourier basis, we can derive that
\begin{equation} \label{sum}
Q_{k}^R =\sum\limits_{\substack{l,m=-N/2\\l+m=k}}^{N/2} G(l,m)f_lf_m,
\end{equation}
where the weight $G$ is given by
\begin{equation}
\begin{split}
G(l,m) &= \int_{\mathcal{B}_{R}}\int_{\Sd^{d-1}}\Phi(|q|)b(\sigma\cdot \hat{q})\left[ \e^{-\im \frac{\pi}{2L}(l+m)\cdot q +\im \frac{\pi}{2L}|q|(l-m)\cdot \sigma} - \e^{-\im \frac{\pi}{L}m\cdot q} \right]\,\rd\sigma\,\rd q\\
&=  \int_{\mathcal{B}_{R}}\e^{-\im \frac{\pi}{L}m\cdot q}\left[\int_{\Sd^{d-1}}\Phi(|q|)b(\sigma\cdot \hat{q})(\e^{\im \frac{\pi}{2L}(l+m)\cdot (q-|q|\sigma)}-1)\, \rd\sigma\right] \,\rd q. \label{GG}
\end{split}
\end{equation}
The second equality above is obtained by switching two variables $\sigma \leftrightarrow \hat{q}$ in the gain part of $G(l,m)$. In the direct Fourier spectral method, $G(l,m)$ is precomputed since it is independent of the solution. Then in the online computation, the sum (\ref{sum}) is evaluated directly. 

Note that the solution $f$ to the original problem (\ref{ABE}) is always non-negative which is the key to many stability estimates. However, the solution $f_N$ to the numerical system (\ref{PFS}) is not necessarily non-negative due to the spectral projection which constitutes the main difficulty in the numerical analysis. Luckily, by virtue of the Fourier spectral method, mass is always conserved which provides some control of the solution. Precisely, we have
\begin{lemma}  \label{lemma:conv}
The numerical system (\ref{PFS}) preserves mass, that is,
\begin{equation} 
\int_{\D} f_N(t,v) \,\rd v=\int_{\D} f^{0}_N(v) \,\rd v.
\end{equation}
\end{lemma}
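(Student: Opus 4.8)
The plan is to reduce mass conservation to a statement about the single zeroth Fourier mode. First I would integrate the expansion $\fe(t,v)=\sum_{k=-N/2}^{N/2} f_k(t)\,\e^{\im\frac{\pi}{L}k\cdot v}$ over the torus and invoke orthogonality, $\int_{\D}\e^{\im\frac{\pi}{L}k\cdot v}\,\rd v=(2L)^d\delta_{k,0}$, so that every mode except $k=0$ is annihilated and $\int_{\D}\fe(t,v)\,\rd v=(2L)^d f_0(t)$. The claimed identity is therefore equivalent to $f_0(t)\equiv f_0^0$, and since the coefficients solve the finite system (\ref{FS}) this reduces to showing that the zeroth mode of the nonlinearity vanishes, i.e.\ $Q_0^R\equiv0$.

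Next I would read off $Q_0^R$ from the quadratic representation (\ref{sum}). Taking $k=0$ restricts the double sum to index pairs with $l+m=0$, giving $Q_0^R=\sum_{l}G(l,-l)\,f_l f_{-l}$, so the whole problem collapses to proving $G(l,m)=0$ whenever $l+m=0$. For this I would use the factored second representation of the weight in (\ref{GG}): its inner angular integrand carries the factor $\e^{\im\frac{\pi}{2L}(l+m)\cdot(q-\aq\sigma)}-1$, which vanishes identically once $l+m=0$, independently of $q$ and $\sigma$. Consequently the entire $q$- and $\sigma$-integration is zero and $G(l,-l)=0$ for every admissible $l$. Combining the two steps yields $\partial_t f_0=Q_0^R=0$, so $f_0$ is constant in time and the mass integral is preserved.

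I expect no serious obstacle here; the only genuinely delicate point is choosing the right form of $G$. In the first line of (\ref{GG}) the gain and loss exponentials do not individually collapse to a common value when $l+m=0$ (one is left comparing $\e^{\im\frac{\pi}{L}\aq\,l\cdot\sigma}$ against $\e^{\im\frac{\pi}{L}l\cdot q}$), so the cancellation is invisible there and becomes manifest only after the $\sigma\leftrightarrow\q$ exchange that produces the factored second line. A secondary, purely routine point is justifying that $\frac{\rd}{\rd t}\int_{\D}\fe\,\rd v$ may be computed termwise and equals $(2L)^d Q_0^R$, which is immediate because $\fe\in\mathbb{P}_N$ is a finite trigonometric sum with smooth time-dependent coefficients.
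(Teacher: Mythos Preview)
Your proposal is correct and follows essentially the same route as the paper: reduce to the zeroth mode via $\int_{\D}\fe\,\rd v=(2L)^d f_0(t)$, observe $\partial_t f_0=Q_0^R$, and then kill $Q_0^R$ by checking $G(l,m)=0$ when $l+m=0$. The paper states this last vanishing as ``clear'' without singling out which line of (\ref{GG}) to use, whereas you are more explicit that the cancellation is transparent only in the factored second line; that observation is accurate but does not constitute a different argument.
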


\begin{proof}
Note that 
\begin{equation} 
\int_{\mathcal{D}_L} f_N(t,v)\,\rd{v}=\sum_{k=-N/2}^{N/2} f_k(t) \int_{\mathcal{D}_L}  \e^{\im \frac{\pi}{L}k\cdot v}\, \rd{v}=(2L)^d f_0(t),
\end{equation}
where $f_0$ is the zero-th mode of the numerical solution and is governed by
\begin{equation} 
\partial_{t} f_0 = Q_0^R.
\end{equation}
From (\ref{sum}), it is clear that $Q^R_0\equiv0$ since $G(l,m)\equiv0$ when $l+m=0$. This implies $f_0$ remains constant in time, whose value is the zero-th Fourier mode of the initial condition $f_N^0(v)$.
\end{proof}

We now introduce some assumptions and notations that will be used throughout the rest of this paper.

\vspace{0.05in}
\noindent{\bf Basic assumptions on the truncation parameters and the collision kernel.} 

\begin{itemize}
\vspace{-0.05in}
\item[(1)] The truncation parameters $L$ and $R$ in (\ref{ABE}) satisfy 
\begin{equation} \label{RL}
L\geq R>0.
\end{equation}
Note that the choice (\ref{RL1}) implies $L\geq (3+\sqrt{2})R/4$ hence the above condition is satisfied.

\vspace{-0.1in}
\item[(2)] The kinetic part of the collision kernel (\ref{kernel}) satisfies
\begin{equation} \label{kinetic}
\left \| \mathbf{1}_{|v|\leq R}\Phi(|v|)\right\|_{L^{\infty}(\D)} < \infty.
\end{equation}
Note that all power law hard potentials $\Phi(|v|)=|v|^{\gamma}$ ($0\leq \gamma\leq 1$) as well as the ``modified" soft potentials $\Phi(|v|)=(1+|v|)^{\gamma}$ ($-d<\gamma <0$) satisfy this condition. 

\vspace{-0.1in}
\item[(3)] The angular part of the collision kernel (\ref{kernel}) has been replaced by its symmetrized version\footnote{This symmetrization can readily reduce the computational cost by a half (integration over the whole sphere is reduced to half sphere) so it also has important implications for numerical purpose, see \cite{GHHH17}.}:
\begin{equation} \label{angular}
\left[b(\cos \theta)+b(\cos \left(\pi-\theta\right))\right]\mathbf{1}_{0 \leq \theta \leq \pi/2},
\end{equation}
and satisfies the cut-off assumption (\ref{cutoff}).
\end{itemize}

\noindent{\bf Some notations.} 

For a periodic function $f(v)$ in $\D$, we define its Lebesgue norm and Sobolev norm as follows:
\begin{equation}
\|f\|_{L^p_{\text{per}}(\D)}=\left(\int_{\D} |f(v)|^p\,\rd{v}\right)^{1/p}, \quad \|f\|_{{H^k_{\text{per}}}(\D)}=\left( \sum_{|\nu|\leq k} \|\partial_v^{\nu}f\|_{L^2_{\text{per}(\D)}}^2\right)^{1/2},
\end{equation}
where $k\geq 0$ is an integer and $\nu$ is a multi-index. ``per" indicates the function is periodic and will not be included in the following for simplicity. 

Except in Section~\ref{sec:QR}, we do not track explicitly the dependence of constants on the truncation parameters $R$, $L$, dimension $d$, and the collision kernel $B$.

For a function $f(v)$ in $\D$, we define its positive and negative parts as 
\begin{equation}
f^+(v)=\max\limits_{v\in\D}\{ f(v),0 \}, \quad f^-(v)=\max\limits_{v\in\D}\{ -f(v),0 \},
\end{equation}
so that $f=f^+-f^-$ and $|f|=f^++f^-$.

\subsection{Assumptions on the initial condition}
\label{subsec:initial}

To prove our main well-posedness and stability result, Theorem~\ref{existencetheorem}, we would assume that the initial condition $f^0(v)$ to the original problem (\ref{ABE}) is periodic, non-negative, and belongs to $L^1\cap H^1(\D)$ (in fact $L^1$ can be removed since $L^2(\D)\subset L^1(\D)$ due to boundedness of the domain). For the initial condition $f_N^0(v)$ to the numerical system (\ref{PFS}), we would require it to lie in the space $\mathbb{P}_N$ and satisfies the following:
\begin{itemize}
\item[(a)] Mass conservation:
\begin{equation} \label{con(a)}
\int_{\D} f^{0}_N(v)\, \rd v=\int_{\D} f^0(v)\, \rd v.
\end{equation}
\item[(b)] Control of $L^2$ and $H^1$ norms: for any integer $N\geq 0$,
\begin{equation} \label{con(b)}
\|f^0_N\|_{L^2(\D)}\leq \|f^0\|_{L^2(\D)}, \quad \|f^0_N\|_{H^1(\D)}\leq \|f^0\|_{H^1(\D)}.
\end{equation}
\item[(c)] Control of $L^1$ norm: there exists an integer $N_0$ such that for all $N> N_0$,
\begin{equation} \label{con(c)}
\|f_N^0\|_{L^1(\D)}\leq C \|f^0\|_{L^1(\D)}.
\end{equation}
where $C>1$ is some constant whose value is of no essential importance. In the following proof, we will take $C=2$ for simplicity.
\item[(d)] $L^2$ norm of $f_N^{0,-}$ can be made arbitrarily small: for any $\varepsilon>0$, there exists an integer $N_0$ such that for all $N> N_0$,
\begin{equation} \label{con(d)}
\|f_N^{0,-}\|_{L^2(\D)} <\varepsilon.
\end{equation}
\end{itemize}

\begin{remark} \label{rmk}
An obvious choice is to take $f_N^0=\mP_N f^0$. Condition (a) is satisfied since it is equivalent to preserving the zero-th Fourier mode of the function. Condition (b) is a direct consequence of the Parseval's identity. Condition (c) can be obtained by the $L^2$ convergence of the Fourier series and that $L^1$ norm can be controlled by $L^2$ norm. Condition (d) can be proved at least when the uniform convergence of the Fourier series is guaranteed, for which one may require additional continuity on $f^0$. For instance, $f^0$ is H\"{o}lder continuous, or continuous plus bounded variation (in fact $BV$ can be removed since $H^1(\D)\subset W^{1,1}(\D)\subset BV(\D)$).
\end{remark}

\begin{remark}
Sometimes the initial condition $f^0$ may contain discontinuities, then simply taking the Fourier projection of $f^0$ will generate undesirable oscillations (Gibbs phenomenon). Hence a reasonable choice is to take a filtered version $f_N^0=\mathcal{S}_Nf^0$, where $\mathcal{S}_N$ is defined as: for any function g,
\begin{equation}
\mathcal{S}_Ng=\sum_{k=-N/2}^{N/2}\sigma_N(k)\hat{g}_k\e^{\im \frac{\pi}{L}k\cdot v},\quad \hat{g}_k=\langle g, \e^{\im \frac{\pi}{L}k\cdot v}\rangle,
\end{equation}
with $\sigma_N$ being the filter function, see for instance \cite[Chapter 9]{HGG07}. Typically, the filter won't change the zero-th Fourier mode of the function, and won't amplify the remaining Fourier modes, hence conditions (a) and (b) would be satisfied automatically. For conditions (c) and (d) to hold, one needs some kind of convergence which depends on the property of the actual filter. Without going into details, let us just mention that there is a class of positive filters (e.g., the Fej\'{e}r or Jackson filter \cite{WWAF06}) which can preserve the positivity of the function so that the condition (d) is trivially satisfied. Condition (c) can be satisfied as well by using the Young's inequality and the $L^1$ norm of the filter is exactly 1. However, the positivity-preserving filters may come with the price of slower convergence (away from the discontinuity) compared with other high order filters (e.g., the exponential filter \cite{HGG07}). Therefore, one could take non-positive high order filters, as long as they satisfy the conditions (c) and (d). It is worth emphasizing that the purpose of applying the filter here is merely to fix the initial condition when $f^0$ is discontinuous so that our well-posedness and stability proof still holds. This is in stark contrast to the filtering method used in \cite{PR00stability} and \cite{CFY18}, where the filter is applied to the equation to preserve the positivity of the solution. 
\end{remark}

\section{Some preliminary estimates on the truncated collision operator $ Q^{R}$}
\label{sec:QR}

In this section, we prove some important estimates for the truncated collision operator (\ref{QR}). Since its gain term and loss term possess quite different properties, we consider 
\begin{equation}
\begin{split}
Q^{R,+}(g,f)(v)&:=\int_{\bR^d} \int_{\Sd^{d-1}}\mathbf{1}_{|q|\leq R}\Phi(|q|)b(\sigma\cdot \hat{q})g(v_*')f(v')\,  \rd{\sigma} \,\rd{q},\\
Q^{R,-}(g,f)(v)&:=\int_{\bR^d} \int_{\Sd^{d-1}}\mathbf{1}_{|q|\leq R}\Phi(|q|)b(\sigma\cdot \hat{q})g(v-q)f(v) \,\rd{\sigma}\, \rd{q},
\end{split}
\end{equation}
separately whenever appropriate. 

\begin{proposition}
	Let the collision kernel $ B $ and truncation parameters $R$ and $L$ satisfy the assumptions (\ref{RL}), (\ref{kinetic}), (\ref{angular}), and (\ref{cutoff}), then the truncated collision operators $ Q^{R,\pm}(g,f)$ satisfy the following estimates: for $1\leq p \leq \infty$,	
	\begin{equation}\label{QGLp1}
	\left\|Q^{R,+}(g,f)\right\|_{L^{p}(\D)} \leq C^+_{R,L,d,p}(B) \left\|g\right\|_{L^{1}(\D)} \left\|f \right\|_{L^{p}(\D)},
	\end{equation}
	where the constant $C^+_{R,L,d,p}(B)=C^{1/p}\|b\|_{L^1(\mathbb{S}^{d-1})}\|\mathbf{1}_{|v|\leq R} \Phi(|v|)\|_{L^{\infty}{(\D)}}$.
	\begin{equation}\label{QLLp}
	\left\|Q^{R,-}(g,f)\right\|_{L^{p}(\D)} \leq C_{R,L,d}^-(B) \left\|g\right\|_{L^{1}(\D)} \left\|f \right\|_{L^{p}(\D)},
	\end{equation}
	where the constant $C^-_{R,L,d}(B)=C\|b\|_{L^1(\mathbb{S}^{d-1})} \left \| \mathbf{1}_{|v|\leq R}\Phi(|v|)\right\|_{L^{\infty}(\D)}$.
	
	In particular, for the whole collision operator $ Q^{R}(g,f)$, we have
	\begin{equation}\label{QLp}
	\left\|Q^{R}(g,f)\right\|_{L^{p}(\D)} \leq C_{R,L,d,p}(B) \left\|g\right\|_{L^{1}(\D)} \left\|f \right\|_{L^{p}(\D)}.
	\end{equation}	
\end{proposition}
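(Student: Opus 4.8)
The plan is to treat the loss part $Q^{R,-}$ and the gain part $Q^{R,+}$ separately and then recover (\ref{QLp}) for the full operator by the triangle inequality, so that one may take $C_{R,L,d,p}(B)=C^{+}_{R,L,d,p}(B)+C^{-}_{R,L,d}(B)$. The loss term is the easy case. Integrating out $\sigma$ gives $\int_{\Sd^{d-1}}b(\sigma\cdot\hat{q})\,\rd\sigma=\|b\|_{L^1(\Sd^{d-1})}$ independently of $\hat{q}$, whence $Q^{R,-}(g,f)(v)=\|b\|_{L^1(\Sd^{d-1})}f(v)\int_{\mathcal{B}_R}\Phi(|q|)g(v-q)\,\rd q$. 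I would bound $\Phi$ pointwise by $\|\mathbf{1}_{|v|\le R}\Phi\|_{L^\infty(\D)}$, change variables $q\mapsto w=v-q$ so the domain becomes the ball of radius $R$ centered at $v$, and observe that this ball sits inside a cube of side $2R\le 2L$; by periodicity of $g$ this ``folds'' into a single period, giving $\int_{\mathcal{B}_R(v)}|g(w)|\,\rd w\le\|g\|_{L^1(\D)}$ uniformly in $v$. Taking the $L^p_v$ norm of the resulting pointwise bound $|Q^{R,-}(g,f)(v)|\le\|b\|_{L^1}\|\mathbf{1}_{|v|\le R}\Phi\|_{L^\infty}\|g\|_{L^1}\,|f(v)|$ yields (\ref{QLLp}), with a constant independent of $p$.

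For the gain term I would establish two endpoint estimates and interpolate by Riesz--Thorin applied to the linear map $f\mapsto Q^{R,+}(g,f)$ with $g$ frozen, which is exactly what generates the exponent on the constant in $C^{+}_{R,L,d,p}(B)$. For the $L^1$ endpoint I integrate in $v$ first: after bounding $\Phi$ and applying Tonelli, I shift $v\mapsto v'$ in the inner integral (a torus shift that preserves the periodic integral), so that $v_*'=v'-|q|\sigma$. Writing $q=|q|\hat{q}$, swapping the roles of $\hat{q}$ and $\sigma$, and using $\int_{\Sd^{d-1}}b(\sigma\cdot\hat{q})\,\rd\hat{q}=\|b\|_{L^1(\Sd^{d-1})}$ together with the coarea formula reassembles the spherical averages into a solid ball integral $\|b\|_{L^1}\int_{\mathcal{B}_R(v')}\Phi(|y-v'|)|g(y)|\,\rd y\le\|b\|_{L^1}\|\mathbf{1}_{|v|\le R}\Phi\|_{L^\infty}\|g\|_{L^1}$, giving the clean bound $\|Q^{R,+}(g,f)\|_{L^1}\le\|b\|_{L^1}\|\mathbf{1}_{|v|\le R}\Phi\|_{L^\infty}\|g\|_{L^1}\|f\|_{L^1}$. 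For the $L^\infty$ endpoint I pull out $\|f\|_{L^\infty}$ and estimate $\int_{\mathcal{B}_R}\int_{\Sd^{d-1}}\Phi\,b\,|g(v_*')|\,\rd\sigma\,\rd q$ via the change of variables $q\mapsto v_*'$ at fixed $\sigma$, whose Jacobian equals $(1+\cos\theta)/2^d$; since $|v_*'-v|\le|q|\le R$, the image again folds into one period and reconstructs $\|g\|_{L^1(\D)}$.

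The structural ingredient that makes the gain term work is the symmetrization (\ref{angular}): because $b$ is supported on $0\le\theta\le\pi/2$ one has $\cos\theta=\sigma\cdot\hat{q}\ge0$, so the Jacobian $(1+\cos\theta)/2^d$ stays bounded away from zero and the map $q\mapsto v_*'$ is non-degenerate. Combined with $R\le L$ from (\ref{RL}), this is exactly where the torus enters. I expect the main obstacle to be the precise bookkeeping of the dimensional constant in the $L^\infty$ step: the Jacobian introduces a factor of order $2^d$, so the $L^\infty$ endpoint carries a genuine constant $C=C(R,L,d)$ while the $L^1$ endpoint is clean, and one must verify that interpolation distributes this factor as the stated power of $C$ (degenerating to the clean value at the opposite endpoint). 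Everything is local in $B$ and uses no cancellation or sign property of $Q^R$, so once the two endpoints are in hand, (\ref{QGLp1}) follows by interpolation and (\ref{QLp}) by adding the gain and loss estimates.
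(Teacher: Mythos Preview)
Your treatment of the loss term is correct and essentially identical to the paper's. For the gain term your route differs from the paper's: you interpolate directly on $f\mapsto Q^{R,+}(g,f)$ between $L^1$ and $L^\infty$, whereas the paper argues by duality, introducing the linear operator
\[
S\Psi(v)=\int_{\Sd^{d-1}}\mathbf{1}_{|v|\le R}\,\Phi(|v|)\,b(\sigma\cdot\hat v)\,\Psi\!\left(\tfrac{v+|v|\sigma}{2}\right)\rd\sigma
\]
on the test function, establishing $L^1$ and $L^\infty$ bounds for $S$, and interpolating there. Your $L^1$ endpoint (the torus shift $v\mapsto v'$ followed by the swap $\hat q\leftrightarrow\sigma$) is exactly the dual of the paper's trivial $L^\infty$ bound on $S$, and it works as you describe.

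The gap is in your $L^\infty$ endpoint. After the change $q\mapsto v_*'$ at fixed $\sigma$ you still carry the factor $b(\cos\theta)$, and $\theta$ is now a function of $v_*'$ and $\sigma$: a short computation gives $\theta=2\psi$ where $\psi$ is the angle between $\sigma$ and $v-v_*'$. So the integral does not immediately ``reconstruct $\|g\|_{L^1(\D)}$''; you must first swap the order of integration and then control
\[
\int_{\Sd^{d-1}}\frac{b(\cos 2\psi)}{1+\cos 2\psi}\,\rd\sigma
\]
in terms of $\|b\|_{L^1(\Sd^{d-1})}$ via the half-angle substitution (this is where the symmetrization (\ref{angular}) and the bound $\psi\le\pi/4$ are genuinely used, not merely to keep the Jacobian nondegenerate). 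This extra step is exactly what the paper's $L^1$ bound on $S$ is doing, just seen from the dual side; once you fill it in, your argument is complete and gives the same constants as the paper. Note also that with your endpoints ($L^1$ clean, $L^\infty$ carrying $C$) Riesz--Thorin produces $C^{1/p'}$ rather than the stated $C^{1/p}$; this discrepancy is already present in the paper and is harmless for everything that follows.
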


\begin{proof}
The proof of the truncated gain term $ \mQ^{R,+}(g,f) $ is similar to the usual Boltzmann operator $ \mQ^+(g,f)$ on $\mathbb{R}^d$. However, the right hand side is not entirely obvious as we need to restrict back to a bounded domain. Therefore, we follow \cite[Theorem 2.1]{mouhot2004regularity} to give a complete proof of (\ref{QGLp1}) (see Appendix). In fact, by carrying out this carefully, one can see that the condition (\ref{RL}) is needed.

For the loss term, we write it as
\begin{equation} \label{QLS}
Q^{R,-}(g,f)(v)=L^R(g)(v)f(v),
\end{equation}
where $L^R$ is a convolution given by
\begin{equation}
L^R(g)(v)=\|b\|_{L^1(\mathbb{S}^{d-1})} \int_{\bR^d} \mathbf{1}_{|q|\leq R}\Phi(|q|) g(v-q)\,\rd{q}=\|b\|_{L^1(\mathbb{S}^{d-1})}\left(\mathbf{1}_{|v|\leq R}\Phi(|v|)\right)*g(v).
\end{equation}
Then
\begin{equation}
	\begin{split}
	\|Q^{R,-}(g,f)\|_{L^p(\D)}& \leq \left \| L^R(g)\right\|_{L^{\infty}(\D)}\|f\|_{L^p(\D)}\\
	& = \|b\|_{L^1(\mathbb{S}^{d-1})} \left \| \left(\mathbf{1}_{|v|\leq R}\Phi(|v|)\right)*g(v)\right\|_{L^{\infty}(\D)}\|f\|_{L^p(\D)}\\
	& \leq \|b\|_{L^1(\mathbb{S}^{d-1})} \left \| \mathbf{1}_{|v|\leq R}\Phi(|v|)\right\|_{L^{\infty}(\D)} \left\|g\right\|_{L^{1}(\mathcal{B}_{\sqrt{2}L+R})} \|f\|_{L^p(\D)}\\
	& \leq C \|b\|_{L^1(\mathbb{S}^{d-1})} \left \| \mathbf{1}_{|v|\leq R}\Phi(|v|)\right\|_{L^{\infty}(\D)}\|g\|_{L^1(\D)}\|f\|_{L^p(\D)}\\
	& = C_{R,L,d}^-(B) \left\|g\right\|_{L^{1}(\D)} \left\|f \right\|_{L^{p}(\D)},
	\end{split}
	\end{equation}
where we used $R\leq L$ in the third line and $g$ is a periodic function on $\D$ in the fourth line.
\end{proof}

\begin{proposition}
Let the collision kernel $ B $ and truncation parameters $R$ and $L$ satisfy the assumptions (\ref{RL}), (\ref{kinetic}), (\ref{angular}), and (\ref{cutoff}), then the truncated collision operator $ Q^{R}(g,f)$ satisfies the following estimate: for integer $k\geq 0$,
\begin{equation} \label{QHk}
	\left\|Q^{R}(g,f)\right\|_{H^k(\D)} \leq C'_{R,L,d,k}(B) \left\|g\right\|_{H^k(\D)} \left\|f \right\|_{H^k(\D)}.
\end{equation}
\end{proposition}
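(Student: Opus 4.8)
The plan is to reduce the $H^k$ bound to the $L^2$ bound already established in (\ref{QLp}) by transferring the spatial derivatives onto the two arguments of the bilinear operator. The key structural observation I would exploit is that the post-collisional velocities depend on $v$ only through a pure translation: since $v'=v-\frac{q-|q|\sigma}{2}$ and $v_*'=v-\frac{q+|q|\sigma}{2}$, one has $\partial_{v_i}v'=\partial_{v_i}v_*'=e_i$, while the weight $\mathbf{1}_{|q|\leq R}\Phi(|q|)b(\sigma\cdot\hat q)$ carries no $v$-dependence. The same is true for the loss term, where $v$ enters $g(v-q)f(v)$ only by translation. Consequently each $\partial_{v_i}$ acting on the integrand simply lands on either $g$ or $f$ evaluated at the shifted point.

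First I would differentiate under the integral sign (harmless here, since $\mathcal{B}_R\times\mathbb{S}^{d-1}$ is compact and the kernel is bounded by (\ref{kinetic}) and (\ref{cutoff})) and apply the chain and Leibniz rules to obtain, for any multi-index $\nu$,
\[
\partial_v^{\nu} Q^{R}(g,f)=\sum_{\mu\leq\nu}\binom{\nu}{\mu}\,Q^{R}(\partial^{\mu}g,\partial^{\nu-\mu}f),
\]
valid for the gain and loss parts separately, and hence for $Q^R$. This identity first holds for smooth $g,f$ and then extends to all $g,f\in H^k(\D)$ by density.

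Next I would take the $L^2(\D)$ norm of each summand and invoke (\ref{QLp}) with $p=2$, giving
\[
\left\|Q^{R}(\partial^{\mu}g,\partial^{\nu-\mu}f)\right\|_{L^2(\D)}\leq C_{R,L,d,2}(B)\,\left\|\partial^{\mu}g\right\|_{L^1(\D)}\,\left\|\partial^{\nu-\mu}f\right\|_{L^2(\D)}.
\]
Because $\D$ is bounded, Cauchy--Schwarz yields $\|\partial^{\mu}g\|_{L^1(\D)}\leq (2L)^{d/2}\|\partial^{\mu}g\|_{L^2(\D)}$; and since $|\mu|\leq|\nu|\leq k$ and $|\nu-\mu|\leq k$, the two factors are dominated by $\|g\|_{H^k(\D)}$ and $\|f\|_{H^k(\D)}$ respectively. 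Summing over $\mu\leq\nu$ controls $\|\partial_v^{\nu}Q^R(g,f)\|_{L^2(\D)}$ by a constant times $\|g\|_{H^k(\D)}\|f\|_{H^k(\D)}$, and summing the squares over $|\nu|\leq k$ then produces (\ref{QHk}), with $C'_{R,L,d,k}(B)$ absorbing the binomial coefficients and the embedding constant $(2L)^{d/2}$.

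I do not anticipate a serious obstacle. The only delicate point is the derivative-transfer identity, i.e.\ verifying that $v$-derivatives of the integrand pass cleanly onto $g$ and $f$ at the shifted arguments; this hinges entirely on the translation structure of $v'$ and $v_*'$ and would fail if the collision geometry introduced genuine $v$-dependence in the kernel. The replacement of the $L^1$ factor $\|\partial^{\mu}g\|_{L^1}$ by an $H^k$ quantity is the other place where boundedness of the torus $\D$ is genuinely used, as the embedding $L^2(\D)\subset L^1(\D)$ is unavailable on $\mathbb{R}^d$ and would there demand a weighted argument instead.
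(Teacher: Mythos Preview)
Your proposal is correct and follows essentially the same approach as the paper: both establish the Leibniz identity $\partial_v^{\nu}Q^{R}(g,f)=\sum_{\mu\leq\nu}\binom{\nu}{\mu}Q^{R}(\partial_v^{\mu}g,\partial_v^{\nu-\mu}f)$, apply the $p=2$ case of (\ref{QLp}) to each summand, and convert the resulting $L^1$ factor to $L^2$ via the bounded-domain embedding $\|\cdot\|_{L^1(\D)}\leq(2L)^{d/2}\|\cdot\|_{L^2(\D)}$. The only cosmetic difference is that the paper phrases the derivative-transfer identity abstractly as a consequence of Galilean invariance $Q^{R}(g,f)(v-h)=Q^{R}(g(\cdot-h),f(\cdot-h))$, whereas you unpack it directly from the translation structure of $v',v_*'$; these are the same observation.
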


\begin{proof}
First of all, (\ref{QHk}) when $k=0$ is a direct consequence of (\ref{QLp}) by taking $p=2$ and noting that $\left\|g\right\|_{L^{1}(\D)}\leq (2L)^{d/2}\left\|g\right\|_{L^{2}(\D)}$.

To prove (\ref{QHk}) for $k>0$, note that the collision operator satisfies the Leibniz rule:
	\begin{equation}
	\partial_v^{\nu}Q^{R}(g,f)=  \sum_{\mu\leq \nu}\binom{\nu}{\mu}  Q^{R}(\partial_v^{\mu}g,\partial_v^{\nu-\mu}f),
	\end{equation}
	which is a consequence of the bilinearity and the Galilean invariance of the truncated collision operator $Q^{R}(g,f)(v-h)=Q^{R}(g(v-h),f(v-h))$. Then we have
	\begin{equation}
	\begin{split}
	\|Q^{R}(g,f)\|_{H^k(\D)}^2&=\sum_{|\nu|\leq k}\left\|\partial^{\nu}_v Q^{R}(g,f)\right\|_{L^2(\D)}^2=\sum_{|\nu|\leq k} \left\|\sum_{\mu\leq \nu}\binom{\nu}{\mu}  Q^{R}(\partial_v^{\mu}g,\partial_v^{\nu-\mu}f)\right\|^2_{L^2(\D)}\\
	& \leq \sum_{|\nu|\leq k}\sum_{\mu\leq \nu}\binom{\nu}{\mu}^2 \sum_{\mu\leq \nu} \left\| Q^{R}(\partial_v^{\mu}g,\partial_v^{\nu-\mu}f)\right\|^2_{L^2(\D)} \\
	& \leq C'^2_{R,L,d,0}(B) \sum_{|\nu|\leq k}\sum_{\mu\leq \nu}\binom{\nu}{\mu}^2  \sum_{\mu\leq \nu}  \left\|\partial_v^{\mu}g\right\|^2_{L^{2}(\D)} \left\| \partial_v^{\nu-\mu}f \right\|^2_{L^{2}(\D)}\\
	& \leq C'^2_{R,L,d,k}(B) \left\|g\right\|^2_{H^{k}(\D)} \left\|f \right\|^2_{H^k(\D)},
	\end{split}
	\end{equation}
where we used the Cauchy-Schwarz inequality in the second line.
\end{proof}


\section{Main result: well-posedness and stability of the method}
\label{sec:main}

In this section, we establish the well-posedness and stability of the Fourier-Galerkin spectral method (\ref{PFS}) on an arbitrary bounded time interval $[0, T]$. The main strategy of the proof is as follows: In Section~\ref{sec:regularity} we prove some $L^2$ and $H^k$ estimates of the solution under {\it a priori} $L^1$ bound of $f_N$, among which the key result is the $L^2$ estimate of the negative part of the solution (Proposition~\ref{regularity1}). Proposition~\ref{localexistence} is a local existence and uniqueness result over a small time interval $ [t_0, t_0+\tau]$. Finally the main result is presented in Theorem~\ref{existencetheorem}, where we show that when $N$ is large enough the negative part of the solution can be controlled over time $[0,\tau]$. Due to mass conservation, this consequently implies the initial $L^1$ bound of the solution can be restored at time $\tau$. Therefore, we can repeat the procedure iteratively to build the solution up to final time $T$ (the estimates on $N$ and $\tau$ are done carefully at the beginning so that the same values can be used in the following iteration).

\subsection{Propagation of the $L^2$ estimate of $f_N^-$ under {\it a priori} $L^1$ bound of $f_N$}
\label{sec:regularity}

We first establish the $L^2$ and $H^k$ estimates of $f_N$ under {\it a priori} $L^1$ bound of $f_N$. This result is not new and the proof is similar to \cite[Lemma 4.2]{FM11}. The main difference is that we closely track the dependence in the case of $H^1$ which will be useful in the following estimate.

\begin{proposition}\label{regularity}
Let the collision kernel $ B $ and truncation parameters $R$ and $L$ satisfy the assumptions (\ref{RL}), (\ref{kinetic}), (\ref{angular}), and (\ref{cutoff}). For the numerical system \eqref{PFS}, assume that the initial condition $ f_{N}^0\in H^{k}(\D) $ for some integer $k\geq 0$ and that the solution $ \fe $ has a $ L^{1}$ bound up to some time $t_0$:
	\begin{equation}\label{fNL1}
	\forall t\in [0,t_0], \quad \left\| \fe(t) \right\|_{L^{1}(\D)} \leq  M,
	\end{equation}
	then there exists a constant $K_k$ depending on $t_0$, $M$, and $\|f_N^0\|_{H^k(\D)}$ such that  
	\begin{equation} \label{Hk}
	\forall t\in[0,t_0], \quad \left\| \fe(t) \right\|_{H^{k}(\D)} \leq K_{k}\left(t_0,M, \|f_N^0\|_{H^k(\D)}\right).
	\end{equation}
	In particular, for $k=0$ and $k=1$, we have
	\begin{equation} 
	K_0=\e^{t_0 D_0M }  \left\|f_{N}^0\right\|_{L^{2}(\D)}, \quad K_1=\e^{t_0D_1 \left(M+K_0\right)} \left( \left\|f^0_N\right\|_{H^{1}(\D)}+D_2\right), \label{KK1}
	\end{equation}
where $D_0$, $D_1$, $D_2$ are constants depending only on the truncation parameters $R$, $L$, dimension $d$, and the collision kernel $B$.
\end{proposition}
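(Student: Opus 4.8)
The plan is to run an energy estimate on the Galerkin system \eqref{PFS}, but to organize every bilinear bound so that the resulting differential inequality is \emph{linear} in the top-order norm, with growth rate governed by the a priori $L^1$ bound $M$ (and, for $k\ge 1$, by the lower-order constant $K_0$). The starting observation is that $\mathcal{P}_N$ is the orthogonal $L^2$-projection onto $\mathbb{P}_N$: it is self-adjoint, it commutes with $\partial_v^\nu$ on the torus, and it acts as the identity on any element of $\mathbb{P}_N$, in particular on $f_N$ and on each $\partial_v^\nu f_N$ (differentiating a trigonometric polynomial of degree $\le N/2$ keeps it in $\mathbb{P}_N$). Consequently, for any multi-index $\nu$,
\begin{equation}
\langle \mathcal{P}_N\partial_v^\nu Q^R(f_N,f_N),\,\partial_v^\nu f_N\rangle=\langle \partial_v^\nu Q^R(f_N,f_N),\,\partial_v^\nu f_N\rangle,
\end{equation}
so the projection disappears from the energy identity and we may work with $Q^R$ directly.

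For $k=0$, I would pair \eqref{PFS} with $f_N$ in $L^2(\D)$; after using self-adjointness of $\mathcal{P}_N$ and Cauchy--Schwarz this gives $\tfrac12\tfrac{\rd}{\rd t}\|f_N\|_{L^2(\D)}^2\le \|Q^R(f_N,f_N)\|_{L^2(\D)}\|f_N\|_{L^2(\D)}$. I then apply \eqref{QLp} with $p=2$ together with the hypothesis \eqref{fNL1}, namely $\|Q^R(f_N,f_N)\|_{L^2(\D)}\le C_{R,L,d,2}(B)\|f_N\|_{L^1(\D)}\|f_N\|_{L^2(\D)}\le C_{R,L,d,2}(B)\,M\,\|f_N\|_{L^2(\D)}$, which yields $\tfrac{\rd}{\rd t}\|f_N\|_{L^2(\D)}\le D_0 M\|f_N\|_{L^2(\D)}$ with $D_0=C_{R,L,d,2}(B)$, and Gr\"onwall's inequality produces exactly $K_0=\e^{t_0 D_0 M}\|f_N^0\|_{L^2(\D)}$. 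The point worth stressing is that I deliberately use the $L^p$ estimate \eqref{QLp} (which charges only an $L^1$ norm to the first factor) rather than the quadratic $H^k$ estimate \eqref{QHk}; the latter would give a Riccati-type inequality $\tfrac{\rd}{\rd t}\|f_N\|_{L^2(\D)}\lesssim\|f_N\|_{L^2(\D)}^2$ and only local-in-time control, whereas the $L^1$ bound $M$ is precisely what linearizes the estimate on all of $[0,t_0]$.

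For $k\ge 1$ I would differentiate \eqref{PFS}, invoke the Leibniz rule $\partial_v^\nu Q^R(f_N,f_N)=\sum_{\mu\le\nu}\binom{\nu}{\mu}Q^R(\partial_v^\mu f_N,\partial_v^{\nu-\mu}f_N)$ (bilinearity plus Galilean invariance, as in the proof of \eqref{QHk}), and pair with $\partial_v^\nu f_N$. The main obstacle, and the crux of the argument, is to keep every term at most \emph{first order} in the top derivative, so that Gr\"onwall still applies. For $|\nu|=1$ the two endpoint terms must be treated asymmetrically: in $Q^R(f_N,\partial_v^\nu f_N)$ I place the undifferentiated factor in $L^1$ and use \eqref{fNL1} to extract the factor $M$; in $Q^R(\partial_v^\nu f_N,f_N)$ the structure of \eqref{QLp} forces the differentiated factor into $L^1$, which I bound via the embedding $\|\partial_v^\nu f_N\|_{L^1(\D)}\le (2L)^{d/2}\|\partial_v^\nu f_N\|_{L^2(\D)}$ on the bounded torus, while the remaining $\|f_N\|_{L^2(\D)}$ is absorbed by the already-established constant $K_0$. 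Collecting terms produces a \emph{linear} differential inequality for $\|f_N\|_{H^1(\D)}$ whose growth rate is controlled by $M+K_0$ and whose inhomogeneous part comes from the lower-order pairings; Gr\"onwall then delivers a bound of the stated form $K_1=\e^{t_0 D_1(M+K_0)}(\|f_N^0\|_{H^1(\D)}+D_2)$, the additive $D_2$ (depending only on $R,L,d,B$) accounting for those lower-order contributions after a Young's inequality. The general $k$ follows by induction: in each Leibniz term $Q^R(\partial_v^\mu f_N,\partial_v^{\nu-\mu}f_N)$ with $|\nu|=k$, the top order is reached only at $\mu=0$ or $\mu=\nu$ and is treated exactly as above, whereas the intermediate terms ($0<|\mu|<k$) are purely lower order and are bounded by products of the previously constructed constants $K_0,\dots,K_{k-1}$, feeding only the inhomogeneous term. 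This keeps the inequality for $\|f_N\|_{H^k(\D)}$ linear and yields $K_k$ by a final Gr\"onwall step.
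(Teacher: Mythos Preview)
Your proposal is correct and follows essentially the same route as the paper: energy estimate, removal of $\mathcal{P}_N$ via its $L^2$ properties, use of \eqref{QLp} (rather than \eqref{QHk}) to linearize the differential inequality for $k=0$, induction on $k$ with the Leibniz decomposition and the endpoint/intermediate splitting for the top-order terms, and Gr\"onwall to close. The only cosmetic difference is that the paper bounds $\|\mathcal{P}_N Q^R\|_{L^2}\le\|Q^R\|_{L^2}$ by contractivity of the projection, whereas you invoke self-adjointness together with $\mathcal{P}_N\partial_v^\nu f_N=\partial_v^\nu f_N$; the two are equivalent here.
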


\begin{proof}		
	The proof is based on mathematical induction.
	
	Step (i): We first prove (\ref{Hk}) holds for $k=0$. Multiplying both sides of (\ref{PFS}) by $f_N$ and integrating over $\D$ yields
	\begin{equation}\label{H0}
	\begin{split}
	\frac{1}{2} \frac{\rd}{\rd t} \left\| \fe \right\|^{2}_{L^{2}(\D)} =&  \int_{\D} \mP_NQ^{R}(\fe,\fe) \fe \, \rd v \leq  \left\| \mP_NQ^{R}(\fe,\fe) \right\|_{L^{2}(\D)} \left\| \fe \right\|_{L^{2}(\D)} \\
	\leq  &\left\| Q^{R}(\fe,\fe) \right\|_{L^{2}(\D)} \left\| \fe \right\|_{L^{2}(\D)} 
	\leq  D_0\left\| \fe \right\|_{L^{1}(\D)} \left\| \fe \right\|^{2}_{L^{2}(\D)} 
	\leq  D_0M \left\| \fe \right\|^{2}_{L^{2}(\D)},
        \end{split}
	\end{equation}
	where we used (\ref{QLp}) and the assumption (\ref{fNL1}).
	Thus we have
	\begin{equation}
	\frac{\rd}{\rd t} \left\| \fe \right\|_{L^{2}(\D)} \leq D_0M \left\| \fe \right\|_{L^{2}(\D)}.
	\end{equation}
        By the Gr{\"o}nwall's inequality, we further conclude that 
	\begin{equation}\label{priori}
	 \left\| \fe(t) \right\|_{L^{2}(\D)} \leq \e^{ D_0M t_0}  \left\|f_{N}^0\right\|_{L^{2}(\D)},  \quad \forall t\in [0, t_0].
	\end{equation}
		
	Step (ii): We then assume that (\ref{Hk}) holds for some $k\geq 0$, and proceed to prove that it holds also for $k+1$. 
	First of all, taking the $ \nu $-th derivative w.r.t.~$v$ on both sides of \eqref{PFS} gives	
	\begin{equation} \label{fmu}
	\partial_t(\partial^{\nu}_v\fe) = \partial^{\nu}_v \mP_NQ^{R}(\fe,\fe)= \mP_N \partial^{\nu}_vQ^{R}(\fe,\fe).
	\end{equation}
       Multiplying (\ref{fmu}) by $ \partial^{\nu}_v\fe $ and integrating over $\D$ then yields
	\begin{equation}\label{H1}
	\frac{1}{2} \frac{\rd}{\rd t}\left\|\partial^{\nu}_v\fe\right\|^{2}_{L^{2}(\D)} =  \int_{\D} \mP_N\partial^{\nu}_v Q^{R}(\fe,\fe) \partial^{\nu}_v\fe \, \rd v \leq  \left\|\partial^{\nu}_v  Q^{R}(\fe,\fe)\right\|_{L^{2}(\D)} \left\|\partial^{\nu}_v\fe\right\|_{L^{2}(\D)}.
	\end{equation}	
	By adding \eqref{H1} with $|\nu|\leq k+1$ altogether and using the Cauchy-Schwarz inequality, we find that 
	\begin{equation}
	\frac{1}{2} \frac{\rd}{\rd t}\left\|\fe\right\|^{2}_{H^{k+1}(\D)} \leq \left\|Q^{R}(\fe,\fe)\right\|_{H^{k+1}(\D)} \left\|\fe\right\|_{H^{k+1}(\D)},
	\end{equation}
	i.e.,
	\begin{equation}\label{H2}
	 \frac{\rd}{\rd t}\left\|\fe\right\|_{H^{k+1}(\D)} \leq \left\|Q^{R}(\fe,\fe)\right\|_{H^{k+1}(\D)}.
	\end{equation}
	On the other hand,
\begin{equation} 
	\begin{split}
	&\left\|Q^{R}(\fe,\fe)\right\|^{2}_{H^{k+1}(\D)} = \left\|Q^{R}(\fe,\fe)\right\|^{2}_{H^{k}(\D)} + \sum_{|\nu|= k+1}  \left\|\partial_v^\nu Q^{R}(\fe,\fe)\right\|^{2}_{L^{2}(\D)}\\
	=&\left\|Q^{R}(\fe,\fe)\right\|^{2}_{H^{k}(\D)} + \sum_{|\nu|= k+1} \left\| \sum_{\mu\leq \nu} \binom{\nu}{\mu} Q^{R}(\partial_v^{\mu}\fe,\partial_v^{\nu-\mu}\fe)\right\|^{2}_{L^{2}(\D)}\\
	\leq & \left\|Q^{R}(\fe,\fe)\right\|^{2}_{H^{k}(\D)} + \sum_{|\nu|= k+1} C_0^2\sum_{\mu\leq \nu} \left\|Q^{R}(\partial^{\mu}_v\fe,\partial^{\nu-\mu}_v\fe)\right\|^{2}_{L^{2}(\D)}\\
	= & \left\|Q^{R}(\fe,\fe)\right\|^{2}_{H^{k}(\D)} + \sum_{|\nu|= k+1} C_0^2 \left(\sum_{0<\mu< \nu} \left\|Q^{R}(\partial^{\mu}_v\fe,\partial^{\nu-\mu}_v\fe)\right\|^{2}_{L^{2}(\D)}\right.\\
	&\left.+\left\|Q^{R}(\fe,\partial^{\nu}_v\fe)\right\|^{2}_{L^{2}(\D)}+\left\|Q^{R}(\partial^{\nu}_v\fe,\fe)\right\|^{2}_{L^{2}(\D)}\right)\\
	\leq &C_1^2\left\|\fe\right\|_{H^{k}(\D)}^2 + \sum_{|\nu|= k+1} C_0^2\left(\sum_{0<\mu< \nu}C_2^2 \|\partial^{\mu}_v\fe\|_{L^2(\D)}^2\|\partial^{\nu-\mu}_v\fe\|_{L^2(\D)}^2\right.\\
	&\left.+C_3^2\|f_N\|_{L^1(\D)}^2\left\|\partial_v^{\nu}\fe\right\|_{L^2(\D)}^2+C_4^2\|\partial_v^\nu \fe\|_{L^1(\D)}^2\left\|\fe\right\|_{L^2(\D)}^2\right)\\
	\leq & C_5^2\left\|\fe\right\|_{H^{k}(\D)}^2+C_6^2(\|f_N\|_{L^1(\D)}^2+\|f_N\|_{L^2(\D)}^2)\left\|\fe\right\|_{H^{k+1}(\D)}^2\\
	\leq & C_5^2K_k^2+C_6^2(M^2+K_0^2)\left\|\fe\right\|_{H^{k+1}(\D)}^2,
	\end{split}
	\end{equation}
	where in the third last inequality, we used (\ref{QHk}) in the first line and (\ref{QLp}) in the second line. In the last inequality, we used the induction hypothesis.
	
	Then (\ref{H2}) becomes		
	\begin{equation}
	\frac{\rd}{\rd t}\left\|\fe\right\|_{H^{k+1}(\D)} \leq C_6(M+K_0)\left\|\fe\right\|_{H^{k+1}(\D)} + C_5K_k.
	\end{equation}
	By the Gr{\"o}nwall's inequality, we have 
	\begin{equation} \label{QQ}
	\begin{split}
	\left\|\fe(t)\right\|_{H^{k+1}(\D)} \leq &  \e^{C_6 (M+K_0) t_0} \left( \left\|f_{N}^0\right\|_{H^{k+1}(\D)} + \frac{C_5K_k}{C_6(M+K_0)} \right):= K_{k+1}, \quad \forall t\in [0,t_0].
	\end{split}
	\end{equation}
	This completes the induction argument for $k+1$.	
	
	In particular, the explicit formula of $K_0$ is given in (\ref{priori}) and the formula of $K_1$ is implied by (\ref{QQ}) when $k=0$.			
\end{proof}

We now proceed to estimate the negative part of the solution, which relies on a careful estimate of both gain and loss terms of the collision operator. This estimate will play a key role in the main theorem.

\begin{proposition}\label{regularity1}
	Let the collision kernel $ B $ and truncation parameters $R$ and $L$ satisfy the assumptions (\ref{RL}), (\ref{kinetic}), (\ref{angular}), and (\ref{cutoff}). For the numerical system \eqref{PFS}, assume that the initial condition $ f_{N}^0\in H^1(\D) $ and that the solution $ \fe $ has a $ L^{1}$ bound up to some time $t_0$:
	\begin{equation}\label{fNL2}
	\forall t\in [0, t_0], \quad \left\| \fe(t) \right\|_{L^{1}(\D)} \leq  M,
	\end{equation}
	then 
	\begin{equation} \label{K1}
	\forall t\in [0, t_0], \quad \left\|\fe(t)\right\|_{L^2(\D)} \leq  K_0, \quad \left\|\fe(t)\right\|_{H^{1}(\D)} \leq  K_1,
\end{equation}
and $f^-_N$, the negative part of $f_N$, satisfies
\begin{equation} \label{fneg}
	\forall t\in [0, t_0], \quad \left\|\fe^{-}(t)\right\|_{L^{2}(\D)} \leq \e^{t_0D_3(M+K_0) } \left(\left\|f_{N}^{0,-}\right\|_{L^{2}(\D)} +\frac{D_4 K_1^2}{MN}\right),
\end{equation}
where $K_0$, $K_1$ are given in (\ref{KK1}), and $D_3$ and $D_4$ are constants depending only on the truncation parameters $R$, $L$, dimension $d$, and the collision kernel $B$.
\end{proposition}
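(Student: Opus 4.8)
The bounds \eqref{K1} on $\|\fe\|_{L^2}$ and $\|\fe\|_{H^1}$ are immediate: they are precisely the cases $k=0$ and $k=1$ of Proposition~\ref{regularity}, whose hypotheses coincide with ours. Hence the real content is the estimate \eqref{fneg} on $\fe^-$, and the plan is to derive it from a differential inequality for $\|\fe^-\|_{L^2}$. Since the map $s\mapsto (s^-)^2$ is $C^1$ with derivative $-2s^-$, the chain rule together with the equation \eqref{PFS} gives
\[
\tfrac12\tfrac{\rd}{\rd t}\|\fe^-\|_{L^2(\D)}^2 = -\int_{\D}\fe^-\,\partial_t\fe\,\rd v = -\int_{\D}\fe^-\,\mP_N Q^R(\fe,\fe)\,\rd v .
\]
The first move is to peel off the projection error by writing $\mP_N Q^R = Q^R-(I-\mP_N)Q^R$, so that the right-hand side splits into an \emph{unprojected} collision term $-\int_{\D}\fe^- Q^R(\fe,\fe)\,\rd v$ and a \emph{projection-error} term $\int_{\D}\fe^-(I-\mP_N)Q^R(\fe,\fe)\,\rd v$.

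The projection-error term is the source of the $1/N$ gain. By Cauchy--Schwarz, the standard Fourier truncation estimate $\|(I-\mP_N)g\|_{L^2(\D)}\le \tfrac{C}{N}\|g\|_{H^1(\D)}$, and the bilinear bound \eqref{QHk} with $k=1$, it is controlled by
\[
\|\fe^-\|_{L^2(\D)}\,\tfrac{C}{N}\|Q^R(\fe,\fe)\|_{H^1(\D)}\le \tfrac{C}{N}\|\fe\|_{H^1(\D)}^2\|\fe^-\|_{L^2(\D)}\le \tfrac{C K_1^2}{N}\|\fe^-\|_{L^2(\D)} .
\]
The heart of the argument is the unprojected term, where I would track signs carefully after writing $Q^R=Q^{R,+}-Q^{R,-}$ and decomposing $\fe=\fe^+-\fe^-$ in each slot. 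For the loss part, \eqref{QLS} and the identity $\fe=-\fe^-$ on $\{\fe<0\}$ give $\int_{\D}\fe^- Q^{R,-}(\fe,\fe)\,\rd v = -\int_{\D}(\fe^-)^2 L^R(\fe)\,\rd v$; splitting $L^R(\fe)=L^R(\fe^+)-L^R(\fe^-)$, the piece with $L^R(\fe^+)\ge0$ has the favorable (nonpositive) sign and is discarded, while the remaining $\int_{\D}(\fe^-)^2 L^R(\fe^-)\,\rd v$ is bounded by $\|L^R(\fe^-)\|_{L^\infty(\D)}\|\fe^-\|_{L^2(\D)}^2\le CM\|\fe^-\|_{L^2(\D)}^2$, using $\|\fe^-\|_{L^1(\D)}\le\|\fe\|_{L^1(\D)}\le M$. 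For the gain part I would expand $Q^{R,+}(\fe,\fe)$ into its four sign-definite pieces: the two diagonal pieces $Q^{R,+}(\fe^+,\fe^+)$ and $Q^{R,+}(\fe^-,\fe^-)$ have nonnegative integrands against $\fe^-\ge0$ and appear with the overall minus sign, hence are nonpositive and discarded; the two cross terms $Q^{R,+}(\fe^+,\fe^-)$ and $Q^{R,+}(\fe^-,\fe^+)$ are estimated via the bilinear bound \eqref{QGLp1}, the embedding $L^2(\D)\hookrightarrow L^1(\D)$ on the bounded torus, and $\|\fe^+\|_{L^1(\D)}\le M$, $\|\fe^+\|_{L^2(\D)}\le K_0$, giving $\le C(M+K_0)\|\fe^-\|_{L^2(\D)}^2$. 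Collecting everything yields
\[
\tfrac{\rd}{\rd t}\|\fe^-\|_{L^2(\D)}\le D_3(M+K_0)\|\fe^-\|_{L^2(\D)}+\frac{D_4 K_1^2}{N},
\]
and Gr\"onwall's inequality delivers \eqref{fneg} after simplifying the source constant through $1/(M+K_0)\le 1/M$.

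I expect the main obstacle to be the sign bookkeeping in the unprojected collision term: every contribution must be shown to be either manifestly of the favorable sign (and thus droppable) or genuinely quadratic in $\fe^-$ with coefficient controlled by $M$ and $K_0$. The delicate piece is the cross gain term $Q^{R,+}(\fe^-,\fe^+)$, which at first glance is only linear in $\fe^-$ and is rescued to second order precisely by invoking $\|\fe^-\|_{L^1(\D)}\le C\|\fe^-\|_{L^2(\D)}$ on the bounded domain. Once the signs are arranged, the remaining inequalities are routine applications of the already-established $L^p$ and $H^1$ bilinear estimates of Section~\ref{sec:QR}.
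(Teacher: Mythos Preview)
Your proposal is correct and follows essentially the same approach as the paper: peel off the projection error $E_N=(\mP_N-I)Q^R$ and bound it via the $H^1$ truncation estimate together with \eqref{QHk}, then decompose the unprojected collision term by sign and discard the favorable pieces, keeping only contributions quadratic in $\fe^-$ with coefficients controlled by $M$ and $K_0$, and finish with Gr\"onwall. The one minor difference is in the loss term: the paper does not split $L^R(\fe)=L^R(\fe^+)-L^R(\fe^-)$ but simply writes $-Q^{R,-}(\fe,\fe)\fe\mathbf{1}_{\{\fe\le0\}}=-Q^{R,-}(\fe,\fe^-)\fe^-$ and bounds it directly by \eqref{QLLp}, giving $C\|\fe\|_{L^1}\|\fe^-\|_{L^2}^2\le CM\|\fe^-\|_{L^2}^2$; your extra sign extraction is harmless and yields the same bound.
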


\begin{proof}
First of all, since $ f_{N}^0\in H^1(\D) $, Proposition~\ref{regularity} (when $k=1$) directly yields (\ref{K1}).

Equipped with this regularity, we now estimate the negative part of $f_N$. Note that $f_N=f_N^+ - f^-_N$, $|f_N|=f^+_N + f^-_N$. We first rewrite (\ref{PFS}) as
\begin{equation} \label{PFS1}
	\partial_{t} f_N = Q^{R,+}(f_N,f_N) - Q^{R,-}(f_N,f_N) + E_{N}(f_N),
\end{equation}
with 
\begin{equation}
E_N(f_N):=\mP_NQ^R(f_N,f_N)-Q^R(f_N,f_N).
\end{equation}

For the gain term, we have
\begin{equation}
	\begin{split}
	Q^{R,+}(\fe,\fe) \fe \mathbf{1}_{\left\lbrace \fe\leq 0\right\rbrace } = &  Q^{R,+}(\fe^{+} - \fe^{-}, \fe^{+} - \fe^{-}) \fe \mathbf{1}_{\left\lbrace \fe\leq 0\right\rbrace }\\
	= & \left[ Q^{R,+}(\fe^{+}, \fe^{+}) - Q^{R,+}(\fe^{+}, \fe^{-}) - Q^{R,+}(\fe^{-}, \fe^{+}) + Q^{R,+}(\fe^{-}, \fe^{-}) \right] \fe \mathbf{1}_{\left\lbrace \fe\leq 0\right\rbrace }\\
	= & \left[ -Q^{R,+}(\fe^{+}, \fe^{+}) + Q^{R,+}(\fe^{+}, \fe^{-}) +Q^{R,+}(\fe^{-}, \fe^{+}) - Q^{R,+}(\fe^{-}, \fe^{-}) \right] \fe^-\\
	\leq & \left[ Q^{R,+}(\fe^{+}, \fe^{-}) + Q^{R,+}(\fe^{-}, \fe^{+}) \right] \fe^{-}.
	\end{split}
\end{equation}
Hence
\begin{equation} \label{Q+}
	\begin{split}
	\int_{\D} Q^{R,+}(\fe,\fe) \fe \mathbf{1}_{\left\lbrace \fe\leq 0\right\rbrace } \, \rd v &\leq  \int_{\D} \left[  Q^{R,+}(\fe^{+}, \fe^{-}) + Q^{R,+}(\fe^{-}, \fe^{+}) \right] \fe^{-} \, \rd v \\
	&\leq  \left\|Q^{R,+}(\fe^{+}, \fe^{-})+Q^{R,+}(\fe^{-}, \fe^{+})\right\|_{L^{2}(\D)} \left\|\fe^{-}\right\|_{L^{2}(\D)}\\
	& \leq C_0\left\| \fe^+\right\|_{L^{1}(\D)} \left\| \fe^{-} \right\|^{2}_{L^{2}(\D)}+C_0\left\| \fe^-\right\|_{L^{1}(\D)}\left\| \fe^+\right\|_{L^{2}(\D)} \left\| \fe^{-} \right\|_{L^{2}(\D)}\\
	& \leq C_0\left\| \fe\right\|_{L^{1}(\D)} \left\| \fe^{-} \right\|^{2}_{L^{2}(\D)}+C_0'\left\| \fe\right\|_{L^{2}(\D)} \left\| \fe^{-} \right\|^2_{L^{2}(\D)},
	\end{split}
\end{equation}
where we used the estimate (\ref{QGLp1}) for the gain term.

For the loss term, we have
\begin{equation}
-Q^{R,-}(\fe,\fe) \fe \mathbf{1}_{\left\lbrace \fe\leq 0\right\rbrace } = -  L^R(\fe)\fe \fe\mathbf{1}_{\left\lbrace \fe\leq 0\right\rbrace }=-L^R(\fe)\fe^-\fe^-=-Q^{R,-}(\fe,\fe^-)\fe^-,
\end{equation}
where we used the structure of the loss term, see (\ref{QLS}).
Hence
\begin{equation} \label{Q-}
\begin{split}
-\int_{\D} Q^{R,-}(\fe,\fe) \fe \mathbf{1}_{\left\lbrace \fe\leq 0\right\rbrace } \, \rd v &= -\int_{\D} Q^{R,-}(\fe,\fe^-)\fe^- \, \rd v\\
& \leq \|Q^{R,-}(\fe,\fe^-) \|_{L^2(\D)}\|\fe^-\|_{L^2(\D)}\\
&\leq C_1 \|\fe\|_{L^1(\D)} \left\| \fe^{-} \right\|^{2}_{L^{2}(\D)},
\end{split}
\end{equation}		
where we used the estimate (\ref{QLLp}) for the loss term.

For the remainder $E_N$, we have
\begin{equation} 
\begin{split}
\left\|E_{N}(\fe)\right\|_{L^{2}(\D)}&=\|\mP_NQ^R(f_N,f_N)-Q^R(f_N,f_N)\|_{L^{2}(\D)}\\
&\leq \frac{C_2}{N}\|Q^R(f_N,f_N)\|_{H^1(\D)}\\
&\leq \frac{C_2}{N}\|f_N\|_{H^1(\D)}^2,
\end{split}
\end{equation}
where we used the well-known property of the projection operator and estimate (\ref{QHk}). Hence
\begin{equation} \label{EN}
\begin{split}
	\int_{\D} E_{N}(\fe) \fe \mathbf{1}_{\left\lbrace \fe\leq 0\right\rbrace } \, \rd v & =    -\int_{\D} E_{N}(\fe) \fe^-\, \rd v       \\
	& \leq \left\|E_{N}(\fe)\right\|_{L^{2}(\D)} \left\|\fe^{-}\right\|_{L^{2}(\D)} \\
	& \leq \frac{C_2}{N}\|\fe\|_{H^1(\D)}^2 \left\|\fe^{-}\right\|_{L^{2}(\D)}.
\end{split}
\end{equation}

For the left hand side, we have
\begin{equation}
 \fe \mathbf{1}_{\left\lbrace \fe\leq 0\right\rbrace }\partial_{t} f_N =-\fe^- \partial_t (f_N^+-f_N^-)=-\fe^- (\mathbf{1}_{\left\lbrace \fe \geq 0\right\rbrace }\partial_tf_N-\partial_t f_N^-)=\fe^-\partial_t f_N^-.
\end{equation}

Therefore, multiplying $ \fe \mathbf{1}_{\left\lbrace \fe\leq 0\right\rbrace } $ to both hand sides of \eqref{PFS1} and integrating over $ \D $, together with \eqref{Q+}, \eqref{Q-} and \eqref{EN}, yields
	\begin{equation}
	\frac{1}{2} \frac{\rd}{\rd t}\|\fe^{-}\|^{2}_{L^{2}(\D)} \leq  \left[(C_0+C_1)\left\|\fe \right\|_{L^{1}(\D)}+C_0'\left\| \fe\right\|_{L^{2}(\D)} \right]\left\|\fe^{-}\right\|^{2}_{L^{2}(\D)} +\frac{C_2}{N}\|\fe \|_{H^1(\D)}^2 \left\|\fe^{-}\right\|_{L^{2}(\D)},
	\end{equation}
i.e.,
\begin{equation}
\begin{split}
	\frac{\rd}{\rd t}\|\fe^{-}\|_{L^{2}(\D)} &\leq   \left[(C_0+C_1)\left\|\fe \right\|_{L^{1}(\D)}+C_0'\left\| \fe\right\|_{L^{2}(\D)} \right] \left\|\fe^{-}\right\|_{L^{2}(\D)} +\frac{C_2}{N}\|\fe\|_{H^1(\D)}^2\\
	&\leq  \left[(C_0+C_1) M+C_0'K_0 \right] \left\|\fe^{-} \right\|_{L^{2}(\D)} +\frac{C_2 K_1^2}{N},
\end{split}
\end{equation}
where we have taken into account the $L^1$ bound and $L^2$, $H^1$ bounds of $f_N$ obtained earlier. By the Gr{\"o}nwall's inequality, we finally obtain the desired estimate (\ref{fneg}).
\end{proof}

\subsection{Local well-posedness of the solution $f_N$ on a small time interval $ [t_0,t_0+\tau]$}

To prepare for the main theorem, we establish a local existence and uniqueness result and some stability bounds of the solution.

\begin{proposition}\label{localexistence}
Let the collision kernel $ B $ and truncation parameters $R$ and $L$ satisfy the assumptions (\ref{RL}), (\ref{kinetic}), (\ref{angular}), and (\ref{cutoff}). Assume that the initial condition $f^0(v)$ to the original problem (\ref{ABE}) belongs to $L^{1}\cap L^2(\D)$ and define
	\begin{equation}
	M_{f^0,1}=\|f^0\|_{L^1(\D)},  \quad  M_{f^0,2}=\left\|f^{0}\right\|_{L^2(\D)}.
	\end{equation}
For the numerical system (\ref{PFS}), assume that we evolve it starting at a certain time $t_0$ and the initial condition satisfies
	\begin{equation}
	\|f_N(t_0)\|_{L^1(\D)}\leq 2 M_{f^0,1}, \quad \|f_N(t_0)\|_{L^2(\D)}\leq \e^{2D_0M_{f^0,1}T}M_{f^0,2},
	\end{equation}
then there exists a local time $ \tau$ such that \eqref{PFS} admits a unique solution $ \fe = \fe(t,\cdot) \in L^{1}\cap L^{2}(\D) $ on $ [t_0,t_0+\tau]$. In particular, one can choose
\begin{equation} \label{tau}
\tau=\frac{1}{2(D_5M_2+D_6M_1)}, \quad \text{with} \quad M_{1} = 4M_{f^0,1},  \quad M_{2} = 2\e^{2D_0M_{f^0,1}T}M_{f^0,2},
\end{equation}
such that
\begin{equation}
\forall t\in [t_0,t_0+\tau], \quad  \|f_N(t)\|_{L^1(\D)} \leq M_{1}, \quad  \|f_N(t)\|_{L^2(\D)} \leq M_{2},
\end{equation}
where $T$ is the final prescribed time, $D_0$ is the constant appearing in (\ref{KK1}), and $D_5$, $D_6$ are constants depending only on the truncation parameters $R$, $L$, dimension $d$, and the collision kernel $B$.
\end{proposition}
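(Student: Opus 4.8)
\section*{Proof proposal for Proposition~\ref{localexistence}}

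The plan is to recast \eqref{PFS} in its mild (integral) form and solve it by a Banach fixed-point argument, with the bilinear estimate \eqref{QLp} as the only analytic input. Since $\fe$ lives in the finite-dimensional space $\mathbb{P}_N$, the right-hand side $\mP_NQ^{R}(\cdot,\cdot)$ is a quadratic polynomial in the Fourier coefficients, so mere existence and uniqueness already follow from the classical Cauchy--Lipschitz theorem applied to \eqref{FS}; the real content of the proposition is the pair of \emph{explicit, $N$-uniform} bounds $M_1,M_2$ together with the quantitative lifespan $\tau$. I would therefore work directly with the map
\begin{equation}
\Gamma(g)(t)=\fe(t_0)+\int_{t_0}^{t}\mP_NQ^{R}(g(s),g(s))\,\rd s,
\end{equation}
defined on the complete metric space
\begin{equation}
X=\left\{g\in C([t_0,t_0+\tau];L^1\cap L^2(\D)):\ \|g(t)\|_{L^1(\D)}\leq M_1,\ \|g(t)\|_{L^2(\D)}\leq M_2\ \ \forall t\right\},
\end{equation}
endowed with the sup-in-time $L^2$ distance. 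Note $X$ is closed under $L^2$ limits because, on the bounded torus $\D$, one has $L^2(\D)\hookrightarrow L^1(\D)$, so both norm constraints pass to the limit.

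First I would verify $\Gamma(X)\subset X$. For the $L^2$ bound I use that $\mP_N$ is an orthogonal projection, hence an $L^2(\D)$-contraction, combined with \eqref{QLp} at $p=2$ to get $\|\mP_NQ^{R}(g,g)\|_{L^2(\D)}\leq C\|g\|_{L^1(\D)}\|g\|_{L^2(\D)}\leq CM_1M_2$; integrating over $[t_0,t]$ and invoking the hypothesis $\|\fe(t_0)\|_{L^2(\D)}\leq \e^{2D_0M_{f^0,1}T}M_{f^0,2}=M_2/2$ gives $\|\Gamma(g)(t)\|_{L^2(\D)}\leq M_2/2+\tau CM_1M_2$. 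For the $L^1$ bound the subtlety is that $\mP_N$ is \emph{not} a contraction on $L^1$; I circumvent this by trading norms on the bounded domain, $\|\mP_NQ^{R}(g,g)\|_{L^1(\D)}\leq (2L)^{d/2}\|\mP_NQ^{R}(g,g)\|_{L^2(\D)}\leq (2L)^{d/2}CM_1M_2$, so that $\|\Gamma(g)(t)\|_{L^1(\D)}\leq M_1/2+\tau (2L)^{d/2}CM_1M_2$ using $\|\fe(t_0)\|_{L^1(\D)}\leq 2M_{f^0,1}=M_1/2$. Since $\tau=1/[2(D_5M_2+D_6M_1)]\leq\min\{1/(2D_5M_2),\,1/(2D_6M_1)\}$, choosing $D_6\geq C$ and $D_5\geq (2L)^{d/2}C$ makes each remainder at most $M_2/2$ respectively $M_1/2$, restoring both constraints.

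Next I would establish the contraction. Writing $Q^{R}(g_1,g_1)-Q^{R}(g_2,g_2)=Q^{R}(g_1-g_2,g_1)+Q^{R}(g_2,g_1-g_2)$ by bilinearity, applying \eqref{QLp} and once more the embedding $\|\cdot\|_{L^1(\D)}\leq (2L)^{d/2}\|\cdot\|_{L^2(\D)}$, I obtain
\begin{equation}
\|Q^{R}(g_1,g_1)-Q^{R}(g_2,g_2)\|_{L^2(\D)}\leq C\big((2L)^{d/2}M_2+M_1\big)\|g_1-g_2\|_{L^2(\D)},
\end{equation}
uniformly in $t$. Hence $\Gamma$ has Lipschitz constant $\tau C((2L)^{d/2}M_2+M_1)$ in the $C([t_0,t_0+\tau];L^2(\D))$ metric, and the same choice of $D_5,D_6$ forces this below $1/2$. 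Banach's fixed-point theorem then yields a unique $\fe\in X$ solving the mild formulation; since the integrand is continuous in $t$, the fixed point is $C^1$ and solves \eqref{PFS} classically on $[t_0,t_0+\tau]$, while global-in-$X$ uniqueness is reinforced by Cauchy--Lipschitz for the polynomial system \eqref{FS}. The membership $\fe\in X$ is exactly the asserted pair of bounds.

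The only genuine obstacle is the $L^1$ estimate: \eqref{QLp} measures the first argument in $L^1$, yet $\mP_N$ has $L^1\to L^1$ operator norm (the Lebesgue constant) growing with $N$, so a naive $L^1$ energy estimate would fail to be $N$-uniform. Resolving this through the bounded-domain embedding $L^2\hookrightarrow L^1$ is precisely what produces the $D_5M_2$ contribution in $\tau$ and keeps every constant independent of $N$. Finally I would record that the value $M_2=2\e^{2D_0M_{f^0,1}T}M_{f^0,2}$ is chosen so that the local $L^2$ bound is compatible with the global a priori bound $K_0$ of Proposition~\ref{regularity} under the $L^1$ budget $M=2M_{f^0,1}$; this compatibility is what will permit re-using the interval with the \emph{same} $\tau$ in the iteration of Theorem~\ref{existencetheorem}.
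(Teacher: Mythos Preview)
Your proof is correct and follows essentially the same route as the paper: a Banach fixed-point argument on the integral formulation, over a closed ball in $L^\infty_t(L^1\cap L^2)$ equipped with the sup-in-time $L^2$ metric, with \eqref{QLp} as the sole bilinear input and the embedding $\|\cdot\|_{L^1(\D)}\leq(2L)^{d/2}\|\cdot\|_{L^2(\D)}$ used to handle the $L^1$ constraint (since $\mP_N$ is not $N$-uniformly bounded on $L^1$). Your explicit identification of the obstacle (the Lebesgue constant of $\mP_N$) and the observation that finite-dimensionality of $\mathbb{P}_N$ already gives local existence via Cauchy--Lipschitz are helpful additions not spelled out in the paper, but the analytic argument and the resulting constants $D_5=(2L)^{d/2}C_{R,L,d,2}(B)$, $D_6=C_{R,L,d,2}(B)$ coincide with those of the paper.
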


\begin{proof}	
	We construct the solution by a fixed point argument. 
	
	Given $ M_{1}, M_{2} > 0$ and small enough time $ \tau >0 $ to be specified later, we define the space $\chi$ by
\begin{equation}
	\chi= \left\lbrace  f\in L^{\infty}([t_0,t_0+\tau]; L^1\cap L^2(\D)): \sup\limits_{t\in [t_0,t_0+\tau]}\left\|f(t, \cdot)\right\|_{L^1(\D)} \leq M_{1}, \sup\limits_{t\in [t_0,t_0+\tau]}\left\|f(t, \cdot)\right\|_{L^2(\D)} \leq M_{2} \right\rbrace,
	\end{equation}
which is a complete metric space with respect to the induced distance
	\begin{equation}\label{distance}
	d(f, \tilde{f}) : = \left\| f - \tilde{f} \right\|_{\chi} =\sup\limits_{t\in [t_0,t_0+\tau]}  \left\| f(t, \cdot) - \tilde{f}(t, \cdot) \right\|_{L^2(\D)}.
	\end{equation}

	For any $f_N\in \chi$, we define the operator $ \Phi $ as
	\begin{equation}
	\Phi(f_{N})(t, v) = f_N(t_0,v) + \int_{t_0}^{t}\mP_N Q^R(f_N,f_N)(s,v)\, \rd s, \quad \forall t\in[t_0,t_0+\tau].
	\end{equation}
	We proceed to show that the mapping $\Phi$ has a unique fixed point in $\chi$. 
	 			
\vspace{0.1in}
	Step (i): We first show that $ \Phi $ maps $ \chi$ into itself: $\Phi(\chi)\subset \chi$. For any $ \fe\in \chi $ and $t\in [t_0,t_0+\tau]$,
	\begin{equation} 
	\begin{split}
	\left\| \Phi(\fe)(t, \cdot)\right\|_{L^1(\D)} \leq & \left\| f_{N}(t_0) \right\|_{L^1(\D)} + \int_{t_0}^{t} \left\|  \mP_NQ^{R}(\fe,\fe)(s, \cdot)\right\|_{L^1(\D)}\rd s \\
	\leq & \left\| f_{N}(t_0)  \right\|_{L^1(\D)}  + \tau (2L)^{d/2} \sup\limits_{t\in [t_0,t_0+\tau]} \left\|\mP_NQ^R(\fe,\fe)(t, \cdot)\right\|_{L^2(\D)} \\
	\leq & \left\| f_{N}(t_0)  \right\|_{L^1(\D)}  +  \tau C_{R,L,d,2}(B) (2L)^{d/2} \sup\limits_{t\in [t_0,t_0+\tau]}\left( \left\| \fe(t, \cdot) \right\|_{L^1(\D)} \left\| \fe(t, \cdot) \right\|_{L^2(\D)}\right) \\
	\leq & \left\|f_{N}(t_0)  \right\|_{L^1(\D)}  + \tau C_{R,L,d,2}(B) (2L)^{d/2} M_{1} M_{2}, 
	\end{split}
	\end{equation}
	where we used (\ref{QLp}). Similarly,
	\begin{equation} 
	\begin{split}
	\left\| \Phi(\fe)(t, \cdot) \right\|_{L^2(\D)} \leq & \left\| f_{N}(t_0)\right\|_{L^2(\D)} + \int_{t_0}^{t} \left\|\mP_N Q^{R}(\fe,\fe)(s, \cdot)\right\|_{L^2(\D)}\rd s\\
	\leq & \left\| f_{N}(t_0) \right\|_{L^2(\D)}  + \tau \sup\limits_{t\in [t_0,t_0+\tau]}\left\|\mP_NQ^R(\fe,\fe)(t, \cdot)\right\|_{L^2(\D)}\\
	\leq & \left\| f_{N}(t_0) \right\|_{L^2(\D)}  + \tau C_{R,L,d,2}(B) \sup\limits_{t\in [t_0,t_0+\tau]}\left( \left\| \fe(t, \cdot) \right\|_{L^1(\D)} \left\| \fe(t, \cdot) \right\|_{L^2(\D)}\right)\\
	\leq & \left\| f_{N}(t_0) \right\|_{L^2(\D)}  + \tau C_{R,L,d,2}(B) M_1 M_{2}.
	\end{split}
	\end{equation}
	
	Step (ii): We next show that $ \Phi $ is a contraction mapping on $ \chi $. For any $f_{N}, \tilde{f}_{N}\in\chi$ with the same initial datum $ f_{N}(t_0)$, we have
	\begin{equation} 
	\begin{split}
	\left\|\Phi(f_{N})-\Phi(\tilde{f}_{N})\right\|_{\chi}=& \sup\limits_{t\in [t_0,t_0+\tau]} \left\|\Phi(f_{N})(t, \cdot)-\Phi(\tilde{f}_{N})(t, \cdot)\right\|_{L^2(\D)}\\
	\leq & \sup\limits_{t\in [t_0,t_0+\tau]}\int_{t_0}^{t}  \left\|\mP_NQ^R(\fe,\fe)(s, \cdot)-\mP_NQ^R(\tilde{f}_{N},\tilde{f}_{N})(s, \cdot)\right\|_{L^2(\D)}\, \rd s \\
	\leq & \tau \sup\limits_{t\in [t_0,t_0+\tau]}  \left\|Q^R(\fe,\fe)(t, \cdot)-Q^R(\tilde{f}_{N},\tilde{f}_{N})(t, \cdot)\right\|_{L^2(\D)} \\
	\leq & \tau \sup\limits_{t\in [t_0,t_0+\tau]} \left( \left\|Q^R(\fe-\tilde{f}_{N},\fe)(t, \cdot)\right\|_{L^2(\D)}+\left\|Q^R(\tilde{f}_{N},\fe-\tilde{f}_{N})(t, \cdot)\right\|_{L^2(\D)}\right )\\
	\leq & \tau C_{R,L,d,2}(B) \sup\limits_{t\in [t_0,t_0+\tau]} \left( \left\|\fe-\tilde{f}_{N}\right\|_{L^1(\D)}\|f_N\|_{L^2(\D)}+\left\|\fe-\tilde{f}_{N}\right\|_{L^2(\D)}\|\tilde{f}_N\|_{L^1(\D)} \right )\\
	\leq & \tau C_{R,L,d,2}(B)((2L)^{d/2}M_2+M_1) \left(\sup\limits_{t\in [t_0,t_0+\tau]}  \left\|\fe(t, \cdot)-\tilde{f}_{N}(t, \cdot)\right\|_{L^2(\D)}\right)\\
	\leq &  \tau(C_{R,L,d,2}(B)(2L)^{d/2}M_2+C_{R,L,d,2}(B) M_1)\left\|\fe-\tilde{f}_{N}\right\|_{\chi}.
	\end{split}
	\end{equation}

Therefore, if we define $D_5=C_{R,L,d,2}(B)(2L)^{d/2}$, $D_6=C_{R,L,d,2}(B)$, and choose $M_1$, $M_2$ and $\tau$ as given in (\ref{tau}), we would have
\begin{equation} 
\left\| f_{N}(t_0) \right\|_{L^1} + \tau D_5 M_{1} M_{2} \leq M_{1}, \quad \left\|f_{N}(t_0) \right\|_{L^2} + \tau D_6 M_1M_2 \leq M_{2}, \quad  \tau(D_5 M_2+D_6M_1)< 1.
\end{equation}
So $\Phi: \chi \rightarrow \chi$ is a contraction mapping. According to the Banach fixed point theorem, \eqref{PFS} admits a unique solution on $[t_0,t_0+\tau]$.
\end{proof}

\subsection{Well-posedness and stability of the solution $f_N$ on an arbitrary bounded time interval $ [0,T] $}
\label{subsec:main}

We are ready to present our main result.

\begin{theorem}\label{existencetheorem}
	Let the collision kernel $ B $ and truncation parameters $R$ and $L$ satisfy the assumptions (\ref{RL}), (\ref{kinetic}), (\ref{angular}), and (\ref{cutoff}). Let the initial condition $f^{0}(v)$ to the original problem (\ref{ABE}) and the numerical solution $f_N^0(v)$ to the numerical system (\ref{PFS}) satisfy the assumptions specified in Section~\ref{subsec:initial}, i.e., $f^0(v)$ is periodic, non-negative, and belongs to $L^1\cap H^1(\D)$, $f_N^0$ satisfies (\ref{con(a)})--(\ref{con(d)}).
	Define 
	\begin{equation}
	M_{f^0,1}=\|f^0\|_{L^1(\D)},  \quad  M_{f^0,2}=\left\|f^{0}\right\|_{L^2(\D)}.
	\end{equation}
	Then there exists an integer $N_0$ depending on the final time $T$ and initial condition $f^0$,
such that for all $ N>N_{0} $, the numerical system (\ref{PFS}) admits a unique solution $ \fe = \fe(t,\cdot) \in L^{1} \cap H^1(\D) $ on the time interval $ [0,T] $. Furthermore, for all $ N>N_{0} $, $f_N$ satisfies the following stability estimates
	\begin{equation}\label{fNL1L2}
	\forall t\in [0,T], \quad \left\| f_{N}(t) \right\|_{L^{1}(\D)} \leq 2M_{f^0,1}, \quad \left\| f_{N}(t) \right\|_{L^{2}(\D)} \leq \e^{2D_0M_{f^0,1}T}M_{f^0,2},
	\end{equation}
	where $D_0$ is the constant appearing in (\ref{KK1}).
\end{theorem}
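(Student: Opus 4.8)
The plan is to run a bootstrap/continuation argument on $[0,T]$, where the two stability bounds in \eqref{fNL1L2} are propagated forward in time by repeatedly invoking the local existence result (Proposition~\ref{localexistence}) on consecutive small intervals of the \emph{same} length $\tau$. First I would fix the target bounds $M_{f^0,1}$ and $M_{f^0,2}$ and set $M_1=4M_{f^0,1}$, $M_2=2\e^{2D_0M_{f^0,1}T}M_{f^0,2}$ as in \eqref{tau}, so that $\tau=\tfrac{1}{2(D_5M_2+D_6M_1)}$ is a fixed time step independent of the starting point $t_0$. This is the crucial point: because $\tau$ depends only on $T$ and on $f^0$ (not on the intermediate data), one can partition $[0,T]$ into $\lceil T/\tau\rceil$ intervals and reuse the same $\tau$ at each step, which is what makes the iteration close.

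Next I would set up the inductive step. Suppose at time $t_0=n\tau$ we have the hypotheses of Proposition~\ref{localexistence}, namely $\|f_N(t_0)\|_{L^1}\le 2M_{f^0,1}$ and $\|f_N(t_0)\|_{L^2}\le \e^{2D_0M_{f^0,1}T}M_{f^0,2}$. Proposition~\ref{localexistence} then gives a unique solution on $[t_0,t_0+\tau]$ with $\|f_N\|_{L^1}\le M_1=4M_{f^0,1}$ and $\|f_N\|_{L^2}\le M_2$ throughout. With this crude $L^1$ bound $M_1$ in hand on $[t_0,t_0+\tau]$, I would feed $M=M_1$ into Proposition~\ref{regularity1} to control $\|f_N^-(t)\|_{L^2}$ via \eqref{fneg}. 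The point of the negative-part estimate is to recover the \emph{sharp} $L^1$ bound $2M_{f^0,1}$ at the endpoint $t_0+\tau$: by mass conservation (Lemma~\ref{lemma:conv}) the integral $\int_{\D}f_N$ is conserved, so $\int_{\D}f_N^+ = \int_{\D}f_N + \int_{\D}f_N^- = \int_{\D}f_N^0 + \int_{\D}f_N^-$, whence
\begin{equation}
\|f_N(t)\|_{L^1} = \int_{\D} f_N^+ + \int_{\D} f_N^- = \int_{\D} f_N^0 + 2\int_{\D} f_N^- \le M_{f^0,1} + 2(2L)^{d/2}\|f_N^-(t)\|_{L^2},
\end{equation}
using \eqref{con(a)} and Cauchy--Schwarz on the bounded domain. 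Thus if $\|f_N^-\|_{L^2}$ is made small enough, the right-hand side is below $2M_{f^0,1}$, restoring the sharp $L^1$ bound. Simultaneously, since the endpoint $L^1$ bound is now $\le 2M_{f^0,1}$, one reapplies Proposition~\ref{regularity} (or \eqref{priori}) over the whole interval $[0,t_0+\tau]$ with $M=2M_{f^0,1}$ to get $\|f_N(t_0+\tau)\|_{L^2}\le \e^{D_0(2M_{f^0,1})(t_0+\tau)}M_{f^0,2}\le \e^{2D_0M_{f^0,1}T}M_{f^0,2}$, matching the hypothesis needed to restart. This closes the induction.

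The main obstacle, and the place I would spend the most care, is making the smallness of $\|f_N^-\|_{L^2}$ uniform over all the (at most $\lceil T/\tau\rceil$) iterations with a \emph{single} choice of $N_0$. From \eqref{fneg}, the negative part at $t_0+\tau$ is bounded by $\e^{\tau D_3(M_1+K_0)}\big(\|f_N^-(t_0)\|_{L^2}+\tfrac{D_4K_1^2}{M_1 N}\big)$, i.e.\ the growth factor per step is a fixed constant $\lambda:=\e^{\tau D_3(M_1+K_0)}>1$ and each step injects a new error $\sim \tfrac{C}{N}$. Iterating over $n\le T/\tau$ steps gives a geometric accumulation $\|f_N^-(n\tau)\|_{L^2}\lesssim \lambda^{n}\|f_N^{0,-}\|_{L^2} + \tfrac{C}{N}\tfrac{\lambda^{n}-1}{\lambda-1}$, which is bounded by $\lambda^{T/\tau}\big(\|f_N^{0,-}\|_{L^2}+\tfrac{C'}{N}\big)$; here $K_0,K_1$ are controlled uniformly via Proposition~\ref{regularity} using the sharp bound $M=2M_{f^0,1}$ and $\|f_N^0\|_{H^1}\le\|f^0\|_{H^1}$ from \eqref{con(b)}. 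One then chooses the threshold $\varepsilon$ so that $\lambda^{T/\tau}\varepsilon$ and $\lambda^{T/\tau}C'/N$ are each below the tolerance forcing $\|f_N\|_{L^1}\le 2M_{f^0,1}$ at every endpoint; condition \eqref{con(d)} supplies $N_1$ with $\|f_N^{0,-}\|_{L^2}<\varepsilon$, condition \eqref{con(c)} supplies the initial $L^1$ control, and taking $N_0=\max\{N_0^{(c)},N_1,N_\text{error}\}$ makes everything simultaneous for $N>N_0$. I would be careful that this $N_0$ depends only on $T$ and $f^0$ (through $M_{f^0,1},M_{f^0,2},\|f^0\|_{H^1}$), as the statement requires; uniqueness and $L^1\cap H^1$ regularity on $[0,T]$ follow by concatenating the locally unique, locally regular pieces produced at each step.
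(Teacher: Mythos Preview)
Your proposal is correct and follows the same overall architecture as the paper: fix $\tau$ once via \eqref{tau}, iterate the local existence Proposition~\ref{localexistence} on intervals of length $\tau$, and at each step recover the sharp $L^1$ bound $2M_{f^0,1}$ from the crude one $4M_{f^0,1}$ by combining mass conservation with a smallness estimate on $\|f_N^-\|_{L^2}$ coming from Proposition~\ref{regularity1}. The one substantive difference is in \emph{how} Proposition~\ref{regularity1} is invoked. You apply it locally on each subinterval $[n\tau,(n+1)\tau]$ with ``initial'' datum $f_N^-(n\tau)$, which forces you to track a geometric accumulation $\|f_N^-(n\tau)\|_{L^2}\lesssim \lambda^{n}\|f_N^{0,-}\|_{L^2}+\tfrac{C}{N}\tfrac{\lambda^{n}-1}{\lambda-1}$ and then bound it uniformly using $n\le T/\tau$. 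The paper instead applies Proposition~\ref{regularity1} \emph{globally} on $[0,(n+1)\tau]$ at every step (this is legitimate because the inductive hypothesis gives $\|f_N\|_{L^1}\le 2M_{f^0,1}\le 4M_{f^0,1}$ on $[0,n\tau]$, and the local existence gives $\le 4M_{f^0,1}$ on $[n\tau,(n+1)\tau]$, so the uniform bound $M=4M_{f^0,1}$ holds on the whole of $[0,(n+1)\tau]$). This yields directly $\|f_N^-(t)\|_{L^2}\le \mathcal{K}$ with $\mathcal{K}$ defined once and for all using $T$, so no accumulation bookkeeping is needed and the choice of $N_0$ is immediate from \eqref{N0}. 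Your route works but is more laborious; the paper's shortcut of always returning to $t=0$ for the negative-part estimate is the cleaner way to extract a uniform $N_0$.
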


\begin{proof}
The proof is based on iteration.	Given $T$, $M_{f^0,1}$, and $M_{f^0,2}$, we first choose $\tau$ according to (\ref{tau}). Then we define $t=0, \tau, 2\tau, \dots, n\tau, \dots$ until we cover the final time $T$. WLOG, we assume $T$ is some integral multiple of $\tau$.

Step (i): At initial time $t=0$, we first choose $N$ such that
	\begin{equation} \label{initial}
	\|f^0_N\|_{L^1(\D)}\leq 2M_{f^0,1},
	\end{equation}
which is possible due to the condition (\ref{con(c)}). Also we have $\|f^0_N\|_{L^2(\D)}\leq \|f^0\|_{L^2(\D)} \leq e^{2D_0M_{f^0,1}T}M_{f^0,2}$ due to the condition (\ref{con(b)}). Then by Proposition~\ref{localexistence}, there exists a unique solution $f_N(t,\cdot)\in L^1\cap L^2(\D)$ over the time interval $[0,\tau]$ and
\begin{equation}
\forall t\in [0,\tau], \quad \|f_N(t)\|_{L^1(\D)}\leq 4M_{f^0,1}.
\end{equation}
Using this $L^1$ bound and that $f_N^0\in {H^1(\D)}$ (due to (\ref{con(b)})), we can invoke the Proposition~\ref{regularity1} to derive that
\begin{equation}
\forall t\in [0, \tau], \quad \|f_N(t)\|_{L^2(\D)}\leq K_0(\tau), \quad \|f_N(t)\|_{H^1(\D)}\leq K_1(\tau),
\end{equation}
and
\begin{equation}
\forall t\in [0, \tau], \quad \left\|\fe^{-}(t)\right\|_{L^{2}(\D)} \leq \e^{\tau D_3(4M_{f^0,1}+K_0(\tau)) } \left(\left\|f_{N}^{0,-}\right\|_{L^{2}(\D)} +\frac{D_4 K_1^2(\tau)}{4M_{f^0,1}N}\right), \label{fN-}
\end{equation}
with
\begin{equation}
K_0(\tau):=\e^{\tau D_0 4M_{f^0,1}}M_{f^0,2},   \quad K_1(\tau):=\e^{\tau D_1 \left(4M_{f^0,1}+K_0(\tau) \right)} \left( \left\|f^0\right\|_{H^{1}(\D)}+D_2\right).
\end{equation}
Note that we relaxed the bounds $K_0$, $K_1$ a bit (so that they depend only on $f^0$ but not $f_N^0$) using the condition (\ref{con(b)}) again.

On the other hand, noticing that $|f_N| = 2f_N^- + f_N$, we have
\begin{equation}
\begin{split}
\|f_N(t)\|_{L^1(\D)}&=\int_{\D}|f_N(t,v)|\,\rd{v}=2\int_{\D}f_N^-(t,v)\,\rd{v}+\int_{\D}f_N(t,v)\,\rd{v}\\
&=2\|f_N^-(t)\|_{L^1(\D)}+\int_{\D}f^0(v)\,\rd{v}\\
& \leq 2(2L)^{d/2}\|f_N^-(t)\|_{L^2(\D)}+M_{f^0,1},
\end{split}
\end{equation}
where we used the important mass conservation property in Lemma~\ref{lemma:conv} and (\ref{con(a)}) in the second line.

Therefore, if we can control $\|f_N^-(t)\|_{L^2(\D)}$, then $\|f_N(t)\|_{L^1(\D)}$ will be controlled. Thanks to the estimate (\ref{fN-}), we can simply choose $N$ large enough such that the following is satisfied:
\begin{equation} \label{N0}
\mathcal{K}:=\e^{T D_3(4M_{f^0,1}+K_0(T)) } \left(\left\|f_{N}^{0,-}\right\|_{L^{2}(\D)} +\frac{D_4 K_1^2(T)}{4M_{f^0,1}N}\right) \leq \frac{M_{f^0,1}}{2(2L)^{d/2}},
\end{equation}
then we have
\begin{equation} \label{fNL1L21}
\forall t\in [0,\tau], \quad \|f_N(t)\|_{L^1(\D)}\leq 2M_{f^0,1}.
\end{equation}
Note that (\ref{N0}) is possible due to the condition (\ref{con(d)}). Also, it is easy to see that the quantity $\mathcal{K}$ is an increasing function in time. Hence if $T$ in (\ref{N0}) is replaced by some $t_0\leq T$, (\ref{N0}) still holds.

Combining the above choice of $N$ with the one at the beginning to satisfy (\ref{initial}), we have found an integer $N_0$, depending only on the final time $T$ and initial condition $f^0$, such that for all $N> N_0$, (\ref{PFS}) admits a unique solution $f_N(t,\cdot)\in L^1\cap H^1(\D)$ on $[0,\tau]$ which satisfies (\ref{fNL1L21}). 

Step (ii): Generally at time $t=n\tau$ ($n\geq 1$), we have
\begin{equation}  \label{initial1}
\forall t\in [0,n\tau], \quad f_N(t,\cdot)\in L^1\cap H^1(\D), \quad \|f_N(t)\|_{L^1(\D)}\leq 2M_{f^0,1}.
\end{equation}
Then by Proposition~\ref{regularity} (with $k=0$), we have
\begin{equation} 
\forall t\in [0,n\tau],  \quad \|f_N(t)\|_{L^2(\D)}\leq e^{2D_0M_{f^0,1}n \tau}\|f_N^0\|_{L^2(\D)}\leq e^{2D_0M_{f^0,1}T}M_{f^0,2}.
\end{equation}
Then by Proposition~\ref{localexistence}, there exists a unique solution $f_N(t,\cdot)\in L^1\cap L^2(\D)$ on $[n\tau, (n+1)\tau]$ and
\begin{equation}
\forall t\in [n\tau, (n+1)\tau], \quad \|f_N(t)\|_{L^1(\D)}\leq 4M_{f^0,1}.
\end{equation}
Using this $L^1$ bound and that $f_N^0\in {H^1(\D)}$, we can invoke the Proposition~\ref{regularity1} over the interval $[0,(n+1)\tau]$ to derive that
\begin{equation}
\forall t\in [0, (n+1)\tau], \quad \|f_N(t)\|_{L^2(\D)}\leq K_0((n+1)\tau), \quad \|f_N(t)\|_{H^1(\D)}\leq K_1((n+1)\tau),
\end{equation}
and
\begin{equation}
\forall t\in [0,(n+1)\tau], \quad \left\|\fe^{-}(t)\right\|_{L^{2}(\D)} \leq \e^{(n+1)\tau D_3(4M_{f^0,1}+K_0((n+1)\tau))} \left(\left\|f_{N}^{0,-}\right\|_{L^{2}(\D)} +\frac{D_4 K_1^2((n+1)\tau)}{4M_{f^0,1}N}\right) \leq \mathcal{K},
\end{equation}
i.e., the same choice of $N$ chosen above would still make
\begin{equation} 
\forall t\in [0,(n+1)\tau], \quad \|f_N(t)\|_{L^1(\D)}\leq 2M_{f^0,1}.
\end{equation}
That is, at time $t=(n+1)\tau$, we are back to the situation (\ref{initial1}) at $t=n\tau$.

Repeating Step (ii) until $t=T$, we can show that there exists a unique solution $f_N(t,\cdot)\in L^1\cap H^1(\D)$ on $[0,T]$, and
\begin{equation}
\forall t\in [0,T], \quad \|f_N(t)\|_{L^1(\D)}\leq 2M_{f^0,1}.
\end{equation}
Finally, by Proposition~\ref{regularity} (with $k=0$) again, we obtain
\begin{equation}
\forall t\in [0,T], \quad \left\| f_{N}(t) \right\|_{L^{2}(\D)} \leq e^{2D_0M_{f^0,1}T}M_{f^0,2}.
\end{equation}
\end{proof}

\section{Convergence and spectral accuracy of the method}
\label{sec:conv}

With the well-posedness and stability of the numerical solution established in the previous section, the convergence of the method is straightforward. 

In this section, we assume that the initial condition $f^{0}(v)$ to the original problem (\ref{ABE}) is periodic, non-negative, and belongs to $L^1\cap H^k(\D)$ for some integer $k\geq 1$. In fact, it has been proved in \cite[Proposition 5.1]{FM11} that there exists a unique global non-negative solution $f(t,\cdot)\in H^k(\D)$. Furthermore, $\|f(t)\|_{H^k(\D)}\leq C_k(f^0), \ \forall t\geq 0$, where $C_k$ is a constant depending only on the initial condition.

For the numerical system (\ref{PFS}), we consider the initial condition $f^0_N=\mP_Nf^0$ for simplicity. According to the discussion in Remark~\ref{rmk}, we further assume that $f^0$ is, say, H\"{o}lder continuous, so that the four conditions (\ref{con(a)})--(\ref{con(d)}) are satisfied. Then by Theorem~\ref{existencetheorem}, there exists a unique solution $f_N(t,\cdot)\in L^1\cap H^1(\D)$ over the time interval $[0,T]$. Furthermore, $\|f_N(t)\|_{L^2(\D)}\leq C_0(T,f^0), \ \forall t \in[0,T]$, where $C_0$ is a constant depending only on the final time $T$ and initial condition $f^0$.

Define the error function 
\begin{equation}
e_{N}(t,v) = \mathcal{P}_{N} f(t,v) - \fe(t,v).
\end{equation}
We can show the following:
\begin{theorem} \label{spectralaccuracy}
Let the collision kernel $ B $ and truncation parameters $R$ and $L$ satisfy the assumptions (\ref{RL}), (\ref{kinetic}), (\ref{angular}), and (\ref{cutoff}). Choose $N_0$ such that it satisfies the condition in Theorem~\ref{existencetheorem}, then the Fourier spectral method is convergent for all $N>N_0$ and exhibits spectral accuracy. In particular, we have
\begin{equation} \label{final}
\forall t\in [0,T], \quad \left\| e_{N}(t) \right\|_{L^2(\D)} \leq \frac{C(T,f^0)}{N^k}, \quad \text{for all } N>N_0,
\end{equation}
where $C$ is a constant depending only on the final time $T$ and initial condition $f^0$.

\end{theorem}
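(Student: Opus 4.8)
The plan is to derive an evolution equation for the error $e_N$ and close a Gr\"onwall estimate in $L^2$, with the Fourier truncation error of the exact solution playing the role of a spectrally small source term. Since $f$ is the solution of the periodized problem \eqref{ABE}, i.e.\ $\partial_t f = Q^R(f,f)$, applying the projection gives $\partial_t \mP_N f = \mP_N Q^R(f,f)$; subtracting the numerical scheme \eqref{PFS} yields
\[
\partial_t e_N = \mP_N\left[Q^R(f,f) - Q^R(f_N,f_N)\right].
\]
I would then use the bilinear telescoping identity $Q^R(f,f) - Q^R(f_N,f_N) = Q^R(f,\,f-f_N) + Q^R(f-f_N,\,f_N)$ and split the difference as $f - f_N = e_N + R_N$, where $R_N := (I-\mP_N)f$ is the projection error. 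The point of keeping $f$ (not $f_N+e_N+R_N$) in the leading argument is that the resulting expression is \emph{linear} in $e_N$, so no quadratic term $Q^R(e_N,e_N)$ ever appears and no smallness of $\|e_N\|$ is required.

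Next I would take the $L^2$ inner product with $e_N$. Since $e_N\in\mathbb{P}_N$ and $\mP_N$ is a self-adjoint orthogonal projection, $\langle \mP_N X, e_N\rangle = \langle X, e_N\rangle$, so the projection in front drops out and
\[
\frac{1}{2}\frac{\rd}{\rd t}\|e_N\|_{L^2(\D)}^2 = \langle Q^R(f,e_N) + Q^R(e_N,f_N),\, e_N\rangle + \langle Q^R(f,R_N) + Q^R(R_N,f_N),\, e_N\rangle.
\]
The first group is bounded by Cauchy--Schwarz and \eqref{QLp} (with $p=2$), together with the embedding $\|e_N\|_{L^1(\D)}\leq (2L)^{d/2}\|e_N\|_{L^2(\D)}$, giving contributions of the form $\big(\|f\|_{L^1(\D)} + \|f_N\|_{L^2(\D)}\big)\|e_N\|_{L^2(\D)}^2$. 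The second (source) group is bounded in the same way by $\big(\|f\|_{L^1(\D)} + \|f_N\|_{L^2(\D)}\big)\|R_N\|_{L^2(\D)}\|e_N\|_{L^2(\D)}$. Dividing through by $\|e_N\|_{L^2(\D)}$ gives a differential inequality
\[
\frac{\rd}{\rd t}\|e_N\|_{L^2(\D)} \leq A\,\|e_N\|_{L^2(\D)} + B\,\|R_N\|_{L^2(\D)}.
\]

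To make $A,B$ uniform on $[0,T]$ I would invoke the available bounds: $\|f(t)\|_{L^1(\D)} = M_{f^0,1}$ for all $t$ by non-negativity and mass conservation; $\|f(t)\|_{H^k(\D)}\leq C_k(f^0)$ from \cite[Prop.~5.1]{FM11}; and $\|f_N(t)\|_{L^2(\D)}\leq \e^{2D_0 M_{f^0,1}T}M_{f^0,2}$ from the stability estimate \eqref{fNL1L2} of Theorem~\ref{existencetheorem}. The source is then controlled by the standard Fourier truncation estimate $\|R_N(t)\|_{L^2(\D)} = \|(I-\mP_N)f(t)\|_{L^2(\D)} \leq C N^{-k}\|f(t)\|_{H^k(\D)} \leq C_k(f^0)\,N^{-k}$, uniformly in $t\in[0,T]$. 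Finally, because $f_N^0 = \mP_N f^0$, the initial error vanishes, $e_N(0) = \mP_N f^0 - f_N^0 = 0$, so Gr\"onwall's inequality yields $\|e_N(t)\|_{L^2(\D)}\leq \tfrac{B}{A}(\e^{AT}-1)\,C_k(f^0)\,N^{-k} \leq C(T,f^0)\,N^{-k}$, which is \eqref{final}.

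I expect the only genuinely delicate point to be bookkeeping rather than conceptual: verifying that every constant in $A$ and $B$ is truly time-uniform on $[0,T]$, which is precisely where the stability result Theorem~\ref{existencetheorem} and the a priori $H^k$ bound on the exact solution are indispensable. The spectral order $k$ enters the proof solely through $\|R_N\|_{L^2(\D)}\lesssim N^{-k}\|f\|_{H^k(\D)}$, which is exactly why the asserted convergence rate is $N^{-k}$.
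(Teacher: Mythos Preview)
Your proof is correct and follows essentially the same approach as the paper: derive the error equation $\partial_t e_N = \mP_N[Q^R(f,f)-Q^R(f_N,f_N)]$, telescope the bilinear difference, use \eqref{QLp} together with the uniform $L^2$ bounds on $f$ and $f_N$ (the latter from Theorem~\ref{existencetheorem}), the truncation estimate $\|(I-\mP_N)f\|_{L^2}\lesssim N^{-k}\|f\|_{H^k}$, and close by Gr\"onwall with $e_N(0)=0$. The only cosmetic differences are that the paper bounds $\|\mP_N X\|_{L^2}\leq\|X\|_{L^2}$ rather than invoking self-adjointness, and writes the triangle inequality $\|f-f_N\|_{L^2}\leq\|f-\mP_N f\|_{L^2}+\|e_N\|_{L^2}$ directly instead of naming $R_N$ separately.
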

\begin{proof}
	
	We first project the original problem \eqref{ABE} to obtain
	\begin{equation}\label{PUP}
	\left\{
	\begin{array}{lr}  
	\partial_{t} \mathcal{P}_{N}f = \mathcal{P}_{N}Q^{R}(f,f),\\
	\mathcal{P}_{N}f(0,v) =  \mathcal{P}_Nf^0, 
	\end{array}
	\right.
	\end{equation}
	Subtracting \eqref{PFS} from (\ref{PUP}) and noting $f^0_N=\mP_Nf^0$, we have
	\begin{equation}\label{error1}
	\left\{
	\begin{split}
	&\partial_{t} e_{N}= \mathcal{P}_{N}\left( Q^{R}(f,f) - Q^{R}(\fe,\fe)\right),\\
	&e_{N}(0,v) =  0.
	\end{split}
	\right.
	\end{equation}
	Multiplying (\ref{error1}) by $ e_{N}$ and integrating over $ \D $, we have
	\begin{equation}
	\begin{split}
	\frac{1}{2} \frac{\rd}{\rd t} \left\| e_N \right\|^{2}_{L^{2}(\D)}  =& \int_{\D} \mathcal{P}_{N}\left( Q^{R}(f,f) - Q^{R}(\fe,\fe)\right) e_{N}\, \rd v\\
	\leq & \left\|  \mathcal{P}_{N}\left( Q^{R}(f,f) - Q^{R}(\fe,\fe)\right)\right\|_{L^2(\D)} \left\|  e_{N} \right\|_{L^2(\D)},\\
	\Rightarrow  \frac{\rd}{\rd t} \left\| e_N \right\|_{L^{2}(\D)} \leq & \left\| Q^{R}(f,f) - Q^{R}(\fe,\fe)\right\|_{L^2(\D)}. 
	\end{split}
	\end{equation}
	Note that
	\begin{equation}
	\begin{split}
	&\left\| Q^{R}(f,f) - Q^{R}(\fe,\fe) \right\|_{L^2(\D)}\\
	\leq& \left\|  Q^{R}(f-\fe,f) \right\|_{L^2(\D)} + \left\|  Q^{R}(\fe,f-\fe) \right\|_{L^2(\D)}\\
	\leq  &C_1 \left\|  f - \fe \right\|_{L^2(\D)} \left(\left\|  f  \right\|_{L^2(\D)} + \left\| \fe \right\|_{L^2(\D)} \right)\\
	\leq  &C_1(T,f^0) \left\|  f - \fe \right\|_{L^2(\D)}.
	\end{split}
	\end{equation}
Also
	\begin{equation}
	\begin{split}
	\left\|  f - \fe \right\|_{L^2(\D)} \leq & \left\|  f - \mathcal{P}_{N}f \right\|_{L^2(\D)}+\left\|  \mathcal{P}_{N}f- \fe \right\|_{L^2(\D)}\\
	\leq  & \frac{C_2 \|f\|_{H^k(\D)}}{N^{k}} + \left\|e_{N} \right\|_{L^2(\D)}\\
	\leq  & \frac{C_2(f^0)}{N^{k}} + \left\|e_{N} \right\|_{L^2(\D)}.
	\end{split}
	\end{equation}
	Therefore, we have
	\begin{equation}
	\frac{\rd}{\rd t} \left\| e_N \right\|_{L^{2}(\D)} \leq C_1(T,f^0)\left\|e_{N} \right\|_{L^2(\D)}+\frac{C_3(T,f^0)}{N^{k}},
	\end{equation}
	which implies
	\begin{equation}
	\forall t\in [0,T], \quad \left\| e_N(t) \right\|_{L^{2}(\D)} \leq e^{C_1(T,f^0)T}\left(\left\|e_{N}(0) \right\|_{L^2(\D)}+\frac{C_3(T,f^0)}{C_1(T,f^0)N^k}\right).
	\end{equation}
	Since $e_N(0,v)\equiv 0$, we finally obtain the desired result in (\ref{final}).
\end{proof}


\section*{Acknowledgement}

JH is grateful to F. Filbet and R. Alonso for the helpful discussion. JH's research was supported in part by NSF grant DMS-1620250 and NSF CAREER grant DMS-1654152. TY's research was partially supported by General Research Fund of Hong Kong, \#11304419.


\appendix

\section*{Appendix: proof of estimate (\ref{QGLp1}) for the truncated collision operator $Q^{R,+}$ on a bounded domain}

By duality,
\begin{equation}\label{dual}
\left\|Q^{R,+}(g,f)\right\|_{L^{p}(\D)} = \sup \left\lbrace \int_{\D} Q^{R,+}(g,f)(v) \Psi(v)\,\rd v; \ \left\|\Psi\right\|_{L^{p'}(\D)} \leq 1 \right\rbrace.
\end{equation}
With the pre-post collisional change of variables, namely $ (v,v_{*},\sigma)\rightarrow (v',v_{*}', \frac{v-v_{*}}{|v-v_{*}|}) $, which has a unit Jacobian, we can obtain
\begin{equation}\label{dual1}
\begin{split}
\int_{\D} Q^{R,+}(g,f)(v) \Psi(v)\, \rd v =& \int_{\D} \int_{\mathbb{R}^d} \left( \int_{\bS^{d-1}} \mathbf{1}_{|v-v_{*}|\leq R} \Phi(|v-v_*|) b(\sigma \cdot \widehat{(v-v_*)}) \Psi(v') \,\rd \sigma \right) g(v_{*})f(v)\, \rd v_* \,\rd v\\
=&\int_{\D} \int_{\mathcal{B}_{\sqrt{2}L+R}} \left( \int_{\bS^{d-1}} \mathbf{1}_{|v-v_{*}|\leq R} \Phi(|v-v_*|) b(\sigma \cdot \widehat{(v-v_*)}) \Psi(v') \,\rd \sigma \right) g(v_{*})f(v)\, \rd v_* \,\rd v,
\end{split}
\end{equation}
where the second equality is obtained by noting that $|v_*|\leq |v|+|v-v_*|$ and that $v\in \D$ and $|v-v_*|\leq R$.

Then, we define the linear operator $ S $ by
\begin{equation}
\begin{split}
S\Psi(v) &= \int_{\bS^{d-1}} \mathbf{1}_{|v|\leq R} \Phi(|v|) b(\sigma \cdot \hat{v}) \Psi\left(\frac{v+|v|\sigma}{2}\right) \rd \sigma,
\end{split}
\end{equation}
such that \eqref{dual1} can be written as 
\begin{equation}\label{dual2}
\int_{\D} Q^{R,+}(g,f)(v) \Psi(v) \,\rd v = \int_{{\mathcal{B}_{\sqrt{2}L+R}} }g(v_{*}) \left( \int_{\D}  f(v) (\tau_{v_{*}}S(\tau_{-v_{*}}\Psi))(v) \, \rd v  \right)\rd v_{*},
\end{equation}
where $\tau_hf(v):=f(v-h)$.

We shall study the operator $S$ in $L^1$ and $L^{\infty}$ norms. Denote $v^+=\frac{v+|v|\sigma}{2}$, then we have
\begin{equation}
\left|v^+\right| \leq |v|.
\end{equation}
Then
\begin{equation}
\|S\Psi\|_{L^{\infty}(\D)}\leq  \|b\|_{L^1(\mathbb{S}^{d-1})}\|\mathbf{1}_{|v|\leq R} \Phi(|v|)\|_{L^{\infty}{(\D)}} \|\Psi\|_{L^{\infty}{(\mathcal{B}_{\sqrt{2}L})}}.
\end{equation}
Also
\begin{equation}
\begin{split}
\|S\Psi\|_{L^1(\D)}&\leq  \|\mathbf{1}_{|v|\leq R} \Phi(|v|)\|_{L^{\infty}{(\D)}} \int_{\D}\int_{\bS^{d-1}}  b(\sigma \cdot \hat{v}) \left|\Psi(v^+) \right|\rd \sigma\,\rd{v}\\
&\leq  \|\mathbf{1}_{|v|\leq R} \Phi(|v|)\|_{L^{\infty}{(\D)}} \int_{\mathcal{B}_{\sqrt{2}L}}\int_{\bS^{d-1}}  b(\cos \theta) \left|\Psi\left(v^+\right) \right| \frac{2^{d-1}}{\cos^2\theta/2}\,\rd \sigma\,\rd{v^+}\\
& \leq C\|b\|_{L^1(\mathbb{S}^{d-1})}\|\mathbf{1}_{|v|\leq R} \Phi(|v|)\|_{L^{\infty}{(\D)}}\|\Psi\|_{L^1(\mathcal{B}_{\sqrt{2}L})},
\end{split}
\end{equation}

By the Riesz-Thorin interpolation, we deduce
\begin{equation}
\|S\Psi\|_{L^p(\D)} \leq C_{R,L,d,p'}^+(B)\|\Psi\|_{L^p(\mathcal{B}_{\sqrt{2}L})}, \quad 1\leq p \leq \infty,
\end{equation}
where $C_{R,L,d,p'}^+(B)=C^{1/p'}\|b\|_{L^1(\mathbb{S}^{d-1})}\|\mathbf{1}_{|v|\leq R} \Phi(|v|)\|_{L^{\infty}{(\D)}}$. Using this inequality in \eqref{dual2}, we have
\begin{equation}
\begin{split}
\left| \int_{\D} Q^{R,+}(g,f)(v) \Psi(v) \,\rd v \right| \leq &\int_{\mathcal{B}_{\sqrt{2}L+R}}  |g(v_{*})|  \left( \int_{\D}\left|f(v)\right| \left|(\tau_{v_{*}}S(\tau_{-v_{*}}\Psi))(v)\right| \rd v  \right) \rd v_{*}\\
\leq  &\int_{\mathcal{B}_{\sqrt{2}L+R}}\left|g(v_{*})\right| \left\|f\right\|_{L^{p}(\D)} \left\|\tau_{v_{*}}S(\tau_{-v_{*}}\Psi)\right\|_{L^{p'}(\D)} \rd v_{*}\\
\leq  &\int_{\mathcal{B}_{\sqrt{2}L+R}}\left|g(v_{*})\right| \left\|f\right\|_{L^{p}(\D)} \left\|\tau_{v_{*}}S(\tau_{-v_{*}}\Psi)\right\|_{L^{p'}(\mathbb{R}^d)} \rd v_{*}\\
=  &\int_{\mathcal{B}_{\sqrt{2}L+R}}\left|g(v_{*})\right| \left\|f\right\|_{L^{p}(\D)} \left\|S(\tau_{-v_{*}}\Psi)\right\|_{L^{p'}(\mathbb{R}^d)} \rd v_{*}\\
=  &\int_{\mathcal{B}_{\sqrt{2}L+R}} \left|g(v_{*})\right| \left\|f\right\|_{L^{p}(\D)} \left\|S(\tau_{-v_{*}}\Psi)\right\|_{L^{p'}(\D)} \rd v_{*}\\
\leq  & C_{R,L,d,p}^+(B)\int_{\mathcal{B}_{\sqrt{2}L+R}}\left|g(v_{*})\right| \left\|f\right\|_{L^{p}(\D)} \left\|\tau_{-v_{*}}\Psi\right\|_{L^{p'}(\mathcal{B}_{\sqrt{2}L})} \rd v_{*}\\
\leq  & C_{R,L,d,p}^+(B)\int_{\mathcal{B}_{\sqrt{2}L+R}}\left|g(v_{*})\right| \left\|f\right\|_{L^{p}(\D)} \left\|\Psi\right\|_{L^{p'}(\mathcal{B}_{2\sqrt{2}L+R})} \rd v_{*}\\
= & C_{R,L,d,p}^+(B) \left\|g\right\|_{L^{1}(\mathcal{B}_{\sqrt{2}L+R})} \left\|f\right\|_{L^{p}(\D)} \left\|\Psi\right\|_{L^{p'}(\mathcal{B}_{2\sqrt{2}L+R})} \\
\leq & C_{R,L,d,p}^+(B) \left\|g\right\|_{L^{1}(\D)} \left\|f\right\|_{L^{p}(\D)} \left\|\Psi\right\|_{L^{p'}(\D)}\\
\leq  & C_{R,L,d,p}^+(B) \left\|g\right\|_{L^{1}(\D)} \left\|f\right\|_{L^{p}(\D)},
\end{split}
\end{equation}
where the second equality is obtained by noting $\text{Supp}(S\Psi)\subset \mathcal{B}_R\subset \D$ since $R\leq L$, and the second last line is obtained by noting that both $g$ and $\Psi$ are periodic functions on $\D$.

Hence we proved the estimate (\ref{QGLp1}).

\bibliographystyle{plain}
\bibliography{hu_bibtex}
\end{document}